 \def\lms{\mathop{\overline{\lim}}\limits}
\def\lmi{\mathop{\underline{\lim}}\limits}
\def\v{\varepsilon}
\def\d{\delta}
\def\a{\alpha}
\def\l{\lambda}
\def\x{\xi}
\def\C{\mathbb C}
\def\R{\mathbb R}
\def\S{Section }
\def\r{\rho}
\def\x{\xi}
\theoremstyle{plain}
\newtheorem{theorem}{Theorem}[section]
\newtheorem{lemma}{Lemma}[section]
\newtheorem{corollary}{Corollary}[section]
\theoremstyle{definition}
\newtheorem{example}{Example}[section]
\numberwithin{equation}{section}
\begin{document}

%

\title{Large Deviations for processes on half-line}
\author{F.C. Klebaner}
\address{School of Mathematical Sciences, Monash University, Clayton,  VIC 3800, Australia.}
\email{fima.klebaner@monash.edu}
\author{A.V. Logachov}
\address{Novosibirsk State University, Novosibirsk, 2 Pirogova Str.,
630090, Russia.} \email{omboldovskaya@mail.ru}
\author{A.A. Mogulskii}
\address{Sobolev Institute of Mathematics of the Siberian Branch of the RAS, Novosibirsk, 4 Acad. Koptyug avenue, 630090, Russia.} \email{mogul@math.nsc.ru}

\begin{abstract} We consider a sequence of processes
$X_n(t)$ defined on half-line \\$0\leq t < \infty$.
We give sufficient conditions for Large Deviation Principle (LDP) to hold in the space of continuous functions with metric
$
  \r_\kappa(f,g)=\sup\limits_{t\ge 0}\frac{|f(t)-g(t)|}{1+t^{1+\kappa}},$ $\kappa\geq 0.$
LDP is established for Random Walks, Diffusions, and CEV  model of ruin, all defined on the half-line. LDP in this space is ``more precise" than that with the usual metric of uniform convergence on compacts.
\end{abstract}

\maketitle

\keywords{Keywords: Large Deviations, Random Walk, Diffusion processes, CEV model.\\
{\it AMS Classification:}  60F10, 60G50, 60H10, 60J60}


\section{Introduction}

In this work we   derive sufficient conditions for a sequence $\{X_n\}_{n=1}^{\infty}$ of stochastic processes $X_n(t)$;
$0\leq t < \infty$,  to satisfy the Large Deviation Principle (LDP)
in the space of continuous functions on $[0,\infty)$, which we denote by $\C$.

In the recent literature
\cite{Puh1}, \cite{Dem}, \cite{Kur} the space $\C$ is considered with the metric

\begin{equation}{\label{0.12}}
  \r^{(P)}(f,g):=\sum_{k=1}^\infty 2^{-k}\min\{\sup_{0\le t\le k}|f(t)-g(t)|, 1\}.
\end{equation}
  \cite{Puh2} (Theorem 2.6) gives sufficient conditions for
$X_n$ to satisfy LDP in the space $(\C,\r^{(P)})$.

As noted in  \cite{Puh2}, convergence $f_n\rightarrow f$ in metric $\r^{(P)}$
is equivalent to convergence in $\C[0,T]$ with uniform metric for any $T\geq 0$. A considerable drawback of   metric  $\r^{(P)}$ is that it  is `` not sensitive" \;  to behaviour of functions as $t\rightarrow \infty$.

We consider the space  $\C$ with   metric
$$
  \r(f,g)=\r_\kappa(f,g):=\sup_{t\ge 0}\frac{|f(t)-g(t)|}{1+t^{1+\kappa}},
$$
for a fixed $\kappa\ge 0$. It is obvious that
$(\C,\r)$  is a complete separable metric (Polish) space.

As we shall see in  \S2, the LDP in the space $(\C,\r)$
is   ``more precise"\; than the LDP in $(\C,\r^{(P)})$.

In this work we treat continuous  processes   on   infinite interval. As we envisage, a treatment of discontinuous processes on   infinite interval will need
essentially different to $\r$ metric,  (see \cite{PechDob}, for the LDP for Compound Poisson processes on  infinite interval).
Note that in \cite{Str}, Theorem 1.3.27, LDP for Wiener process in space $(\C,\r_\kappa)$ when $\kappa=0$ is given, while in \cite{Mak} the Law of Iterated Logarithm is proved for   Wiener process in this space.

The paper is organised as follows.
Sufficient conditions for LDP in the space $(\C,\r)$ are given in \S 2, Theorem 2.1. We also   compare Theorem 2.1 and Theorem 2.6 of \cite{Puh2}, and show that Theorem 2.1 is more precise.
Next we apply Theorem 2.1 to different kind of processes,   such as Random Walks and Diffusions on half line. Only Random Walks case is given here, and the reader referred to the Arxiv version for other examples.

\section{Main Result}

To formulate the main result we give a number of definitions.  For any $T\in (0,\infty)$
 denote by $\C[0,T]$ the metric space of real continuous functions $f=f(t);~0\le t\le T$, with metric
 $$
 \r_T(f,g):=\sup_{0\le t \le T}\frac{|f(t)-g(t)|}{1+t^{1+\kappa}},
 $$
 where $\kappa\ge 0$ is fixed.

We say that in space
 $\C[0,T]$ there is a  (good) rate function
 $$
 I_0^T=I_0^T(f):~\C[0,T]\to [0,\infty],
 $$
 if: $(i)$ it is lower semi-continuous: {\it for any $f\in \C[0,T]$}
 \begin{equation}{\label{0.1}}
   \lmi_{f_n\to f}I_0^T(f_n)\ge I_0^T(f);
\end{equation}
 $(ii)$  {\it for any $r\ge 0$  the set
 $$
   B_{T,r}:=\{f\in \C[0,T]:~I_0^T(f)\le r\}
 $$
 is a compact in $\C[0,T]$. }

For a non-empty set $B\subset \C[0,T]$  let
$$
  I_0^T(B):=\inf_{f\in B}I_0^T(f),~~~I_0^T(\emptyset):=\infty.
    $$
 $(f)_{T,\v}$ and  $(B)_{T,\v}$
denote $\v$-neighbourhood in metric $\r_T$ in space $\C[0,T]$
of $f\in \C[0,T]$ and measurable set
$B\subset \C[0,T]$ respectively. The interior and the closure of a measurable set $B\subset \C[0,T]$ is denoted by $(B)_T$ and $[B]_T$  respectively.

Note that lower semi-continuity (\ref{0.1})
can be written as: {\it for any
$f\in \C[0,T]$}
\begin{equation}{\label{0.2}}
   \lim_{\v\to 0}I_0^T((f)_{T,\v})=I_0^T(f).
\end{equation}
It is obvious that (\ref{0.1}) and  (\ref{0.2})  are equivalent.


For a function $f\in \C$, $f^{(T)}$ denotes its projection on $\C[0,T]$,
$$
f^{(T)}=f^{(T)}(t):=f(t);~0\le t\le T.
$$

Denote by $\C_0\subset \C$ -- the class of functions $f\in \C$, such that $f(0)=0$,
$\lim\limits_{t\to \infty}\frac{f(t)}{1+t^{1+\kappa}}=0$.\\

Let now   $X_n(t);~t\in [0,\infty)$, be a sequence of processes in space $\C_0$.
We  assume  the following conditions.

{\bf I}. {\it For any $T\in (0,\infty)$
processes $X_n^{(T)}$  satisfy LDP in space
$\C[0,T]$  with good rate function $I_0^T$,
i.e.  for any measurable set
$B\subset \C[0,T]$
        $$
        \lms_{n\to \infty}\frac{1}{n}\ln {\bf P}(X^{(T)}_n\in B)\le -I_0^T([B]_T),
        $$
         $$
        \lmi_{n\to \infty}\frac{1}{n}
        \ln {\bf P}(X^{(T)}_n\in B)\ge -I_0^T((B)_T).
        $$

Moreover, for any $f\in \C[0,T]$
there is $g=g_f\in \C_0$, such that $g^{(T)}=f$, and for any
$U\ge T$ it holds
\begin{equation}{\label{0.3}}
      I_0^U(g^{(U)})=I_0^{T}(f).
        \end{equation}
}

Condition (\ref{0.3}) means that one can extend any
$f\in \C[0,T]$ for  $t>T$
such that the rate function will stay the same. It is natural to call the function
$g=g_f$ the most likely extension of $f$ beyond $[0,T]$.

{\bf II}. {\it For any $r\ge 0$
$$
  \lim_{T\to \infty}\sup_{f\in B_r^+} \sup_{t\ge T}\frac{|f(t)|}{1+t^{1+\kappa}}=0,
$$
where
$$
 B_r^+:=\{f\in \C:~\lms_{T\to \infty}I_0^T(f^{(T)})\le r\}.
$$
}

{\bf III}. {\it For any $N<\infty$ and $\v>0$
there is $T=T_{N,\v}<\infty$ such that
$$
\lms_{n\to \infty}\frac{1}{n} \ln
{\bf P}(\sup_{t\ge T}\frac{|X_n(t)|}{1+t^{1+\kappa}}>\v)\le -N.
$$
}

\begin{theorem} \label{thm}
Assume conditions {\bf I, II and III.}  Then
for any
$f\in \C$ there exists
\begin{equation}{\label{N3.1}}
\lim_{T\to \infty}I_0^T(f^{(T)})=:I(f),
\end{equation}
 and  it is a good rate function in the space $(\C,~\r)$.
The sequence  $X_n$ satisfies LDP
 in this space   with rate function $I(f)$, i.e.
 for any measurable $B\subset \C$
         \begin{equation}\label{0.10}
        \lms_{n\to \infty}\frac{1}{n}\ln {\bf P}(X_n\in B)\le -I([B]),
        \end{equation}
         \begin{equation}{\label{0.11}}
        \lmi_{n\to \infty}\frac{1}{n}
        \ln {\bf P}(X_n\in B)\ge -I((B)),
        \end{equation}
where $[B]$, $(B)$ is the closure and the interior of $B$, respectively, and $$
  I(B):=\inf_{f\in B}I(f),
$$
with $I(\emptyset)=\infty$.
 \end{theorem}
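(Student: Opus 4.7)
My plan is to establish the limit in \eqref{N3.1} together with the identification $B_{T,r}=\{g^{(T)}:g\in B_r\}$, then verify $I$ is a good rate function on $(\C,\r)$, and finally derive the LDP bounds by reducing to the finite-horizon LDP on $(\C[0,T],\r_T)$ while using Conditions~II and~III to absorb the tails. The restriction map $\pi_T\colon(\C[0,U],\r_U)\to(\C[0,T],\r_T)$ is $1$-Lipschitz for $U>T$, so the contraction principle applied to the LDP of Condition~I, together with uniqueness of good rate functions, gives $I_0^T(h)=\inf\{I_0^U(\varphi):\varphi^{(T)}=h\}$. In particular $T\mapsto I_0^T(f^{(T)})$ is non-decreasing, so $I(f):=\sup_T I_0^T(f^{(T)})$ exists. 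Setting $B_r:=\{f\in\C:I(f)\le r\}$, the inclusion $B_r^{(T)}\subset B_{T,r}$ is immediate; for the reverse, any $h\in B_{T,r}$ lifts via \eqref{0.3} to $g_h\in\C_0$ with $g_h^{(T)}=h$ and $I_0^U(g_h^{(U)})=I_0^T(h)\le r$ for all $U\ge T$, hence $I(g_h)\le r$.

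Goodness of $I$ follows from this setup. Lower semicontinuity: $I$ is the supremum over $T$ of the lsc maps $I_0^T\circ\pi_T$. Compactness of $B_r$: given $\{f_n\}\subset B_r$, extract a diagonal subsequence whose restrictions converge in each compact $B_{T,r}\subset\C[0,T]$; the consistent limits piece together to a single $f\in\C$. Since $B_r\subset B_r^+$, Condition~II upgrades this local-uniform convergence to convergence in $\r$, and lower semicontinuity gives $I(f)\le r$.

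For the LDP lower bound, fix $f\in(B)$ with $I(f)<\infty$ and $\v>0$ with $(f)_\v\subset B$. By Condition~II applied to $f\in B_{I(f)}^+$ there is $T_1$ with $\sup_{t\ge T_1}|f(t)|/(1+t^{1+\kappa})<\v/8$, and by Condition~III with $N>I(f)$ there is $T_2$ so that $\mathbf{P}(\sup_{t\ge T_2}|X_n(t)|/(1+t^{1+\kappa})>\v/8)\le e^{-nN}$ eventually. For $T\ge T_1\vee T_2$, the intersection of the good-tail event with $\{X_n^{(T)}\in(f^{(T)})_{T,\v/2}\}$ is contained in $\{X_n\in(f)_\v\}$, and Condition~I gives $\liminf\frac1n\ln\mathbf{P}(X_n\in B)\ge -I_0^T(f^{(T)})\ge -I(f)$; infimizing over $f\in(B)$ yields \eqref{0.11}. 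For the upper bound it suffices to treat closed $F$. Fix $r<I(F)$; compactness of $B_r$ and disjointness from $F$ give $\d:=\r(F,B_r)>0$. Choose $\v<\d/4$ and $T$ via Conditions~II and~III with $N>r$. On the good-tail event, $X_n\in F$ forces $\r_T(X_n^{(T)},B_{T,r})\ge\d/2$, for otherwise a closer $h\in B_{T,r}=B_r^{(T)}$ would lift to $g\in B_r$ with $\r(X_n,g)\le\d/2+2\v<\d$, contradicting $X_n\in F$. The set $\{h:\r_T(h,B_{T,r})\ge\d/2\}$ is closed in $\C[0,T]$ and disjoint from the compact sublevel set $B_{T,r}$, so lower semicontinuity forces $r^*:=\inf I_0^T$ on it to be strictly greater than $r$. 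The finite-horizon upper bound combined with Condition~III then gives $\limsup\frac1n\ln\mathbf{P}(X_n\in F)\le -\min(r^*,N)\le -r$, and $r<I(F)$ was arbitrary.

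The main obstacle is the upper bound, especially securing the strict gap $r^*>r$ and cleanly decoupling tail and bulk behaviour. Both steps rest on the identification $B_{T,r}=B_r^{(T)}$, which is precisely where the most likely extension property \eqref{0.3} of Condition~I is indispensable: without it, $\r$-neighbourhoods of compact sublevel sets in $\C$ could not be matched to their $\r_T$-counterparts in $\C[0,T]$, and the reduction from the infinite-horizon problem to the finite-horizon LDP would break down.
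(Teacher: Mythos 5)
Your proposal is correct, but it reaches the upper bound \eqref{0.10} by a genuinely different route. The paper's proof is modular: it establishes a local LDP on $\v$-balls (Lemma~\ref{Lemma2.2}, whose upper half $\lms_{n\to\infty}\frac1n\ln{\bf P}(X_n\in(f)_\v)\le -I((f)_{2\v})$ rests on the identity $I_0^T(B)=I(B^{(T+)})$ derived from \eqref{0.3}), then a weak exponential tightness via finite $3\v$-nets (Lemma~\ref{Lemma2.3}), and finally invokes a general result of Borovkov--Mogulskii to pass from these to $\lms_{n\to\infty}\frac1n\ln{\bf P}(X_n\in B)\le -I((B)_\v)$ and then to $-I([B])$ via $\lim_{\v\to0}I((B)_\v)=I([B])$. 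You instead argue directly for a closed set $F$: compactness of $B_r$ separates $F$ from $B_r$ by a distance $\d>0$; the identification $B_{T,r}=B_r^{(T)}$ (again the place where \eqref{0.3} is indispensable) together with the uniform tail bounds of Conditions II and III shows that on the good-tail event $X_n\in F$ forces $X_n^{(T)}$ into the closed set $\{h:\r_T(h,B_{T,r})\ge\d/2\}$, and goodness of $I_0^T$ yields the strict gap $r^*>r$. This buys a self-contained upper bound that dispenses with both the local LDP upper estimate and the external citation, at the price of using compactness of the sublevel sets inside the upper bound itself (the paper isolates that use in the single identity $\lim_{\v\to 0}I((B)_\v)=I([B])$). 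Your derivation of the monotonicity of $T\mapsto I_0^T(f^{(T)})$ via the contraction principle along the $1$-Lipschitz restrictions plus uniqueness of good rate functions is likewise a valid shortcut where the paper uses an elementary sandwich through the local LDP; the lower bound and the proof that $I$ is a good rate function (Lemma~\ref{Lemma2.1}) follow the paper's argument in substance.
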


Note that if for a  set $B$,
$I([B])=I((B))(=I(B))$,
then  inequalities (\ref{0.10}),  (\ref{0.11})
can be replaced by equality
$$
  \lim_{n\to \infty}\frac{1}{n}\ln {\bf P}(X_n\in B)=-I(B).
$$
Hence the difference
$$
 D ( B ) :=  I((B))-I([B])\geq 0
$$
describes precision of LDP: the smaller the difference the more precise is the theorem.
Theorem 2.6 in \cite{Puh2} gives sufficient conditions    for a sequence
$X_n$ to satisfy LDP in the space $(\C,\r^{(P)})$.

We  compare our Theorem \ref{thm}
and Theorem 2.6 in \cite{Puh2}. It follows that the rate functions in both theorems are the same. This is because projections
$X^{(T)}_n$
on $[0,T]$
satisfy LDP in the space $\C[0,T]$ with uniform metric and rate function $I_0^T(f)$  common for both theorems.
Therefore we can compare these theorems by comparing differences
$$
 D ( B ) := I((B))-I([B])~~~\mbox{and}~~~D ^{(P)}( B ) :=I((B)^{(P)})-I([B]^{(P)}),
$$
where $[B]^{(P)}$, $(B)^{(P)}$ are the closure and the interior of $B$
in metric $\r^{(P)}$, respectively.

As noted earlier, (see also \cite{Puh2}), $\r^{(P)}(f_n,f)\to 0$ as
  $n\to \infty$,
is equivalent to
$\r_T(f_n,f)\to 0$  for any $T>0$.
Therefore  $\r(f_n,f)\to 0$ implies
  $\r^{(P)}(f_n,f)\to 0$. It is  easy to see that the opposite is not true.
Thus
$$
  [B]\subset [B]^{(P)},~~~(B)^{(P)}\subset (B),
$$
therefore
$$
  I([B]^{(P)})\le I([B]),~~~I((B))\le I((B)^{(P)}),
$$
so that we have always $D ( B ) \leq D ^{(P)}( B )$.
Below we give an example  of  $B$  satisfying simultaneously
$$
  I([B])=I((B))\in (0,\infty),~~~I([B]^{(P)})=0.
$$
Hence Theorem 2.1 allows to give  ``precise"
logarithmic asymptotic for
${\bf P}(X_n\in B)$, when Theorem 2.6 in \cite{Puh2} does not.
We conclude that {\it LDP in the space $(\C,\r)$
is more precise than in the space $(\C,\r^{(P)})$}.

\begin{example} Consider Wiener process
$w=w(t)$ on $[0,\infty)$.
Denote
$$
w_n=w_n(t):=\frac{1}{\sqrt{n}}w(t),~~~~t\ge 0.
$$
Since conditions ${\bf I}$---${\bf III}$
are easily checked, then LDP follows from Theorem 2.1 with rate function
$$
  I(f)=  \left\{
           \begin{array}{lcl}
                              \frac{1}{2}\int_0^\infty (f'(t))^2 dt,~~~\mbox{if}~~~f(0)=0, ~~~f \mbox{ is absolutely continuous},\\
                              \infty~~~~\mbox{otherwise}.\\
                              \end{array}
                              \right.
$$

$$
  B=\overline{(f_0)_1}:=\left\{g\in \C:~\sup_{t\ge 0}\frac{|g(t)|}{1+t}\ge 1\right\},~~~f_0=f_0(t)\equiv 0.
$$
Since it is a complement to an open set $(f_0)_1$,
it is closed in $(\C,\r)$, and therefore
$$
  I([B])=I(B)=\inf_{g\in B}I(g).
$$
By Cauchy-Bunyakovski inequality
$$
1\leq \sup_{t\ge 0}\frac{|g(t)|}{1+t}=\sup_{t\ge 0}\frac{|\int_0^t g'(s)ds|}{1+t}\leq
\sup_{t\ge 0}\biggl|\int_0^t (g'(s))^2ds\biggl|^{1/2}\sup_{t\ge 0}\frac{t^{1/2}}{1+t}=\frac{\sqrt{2I(g)}}{2}.
$$
This gives that $I(g)\geq 2$  for all $g \in B$.

Take $f(t)=2tI(0\le t\le 1)+2I(t\ge 1)$.
 It is easy to see that $f\in B$ and $I(f) = 2$. 
Therefore  $I([B]) = I(B) = 2$.

Taking   $f_n(t)= (2+1/n)tI(0\le t\le 1)+2+1/n I(t\ge 1)$, we can see that
$f_n\in (B)$
 and
  $I([B])=I(B)=I((B))=2$.   Hence,
$$
\lim_{n\to \infty}\frac{1}{n}\ln {\bf P}(w_n\in B)=-2.
$$

Consider now $[B]^{(P)}$, the closure of $B$  in metric
$\r^{(P)}$. By taking $g_n(t)=\frac{~t^2}{n}$
it is easy to see that $g_n \in B$ for all    $n$ and
$\lim\limits_{n\to \infty}\r^{(P)}(g_n,f_0)=0$. Therefore, $f_0 \in [B]^{(P)}$.
Therefore $I([B]^{(P)})=0$, and
the upper bound in Theorem~3.4
for the set $B$ is trivial, which does not allow to find logarithmic asymptotic of the required probability.
\end{example}

\section{Proof of   Theorem \ref{thm}}

For  $\v>0$
denote by $(f)_\v$ and $(B)_\v$ the $\v$-neighborhood of $f\in \C$, and set $B\subset \C$,
respectively.

The proof of the Theorem 2.1 consists of three steps.  The first step   proves that $I(f)$ is a good rate function in Lemma \ref{Lemma2.1}.   The second step proves the local LDP for $X_n$ in $(\C_0,\r)$  in Lemma \ref{Lemma2.2} .   The third step proves  a weaker form of exponential tightness for $X_n$ in Lemma \ref{Lemma2.3}.

The
upper bound is obtained  by Lemmas \ref{Lemma2.2} and \ref{Lemma2.3} , for any measurable set $B\subset \C_0$  and $\v>0$
it holds (see e.g. \cite{SMG1}, Theorem 3.1)
$$
  \lms_{n\to \infty}\frac{1}{n}\ln {\bf P}(X_n\in B) \le - I((B)_\v).
$$
As it is known (see e.g. \cite{SMG1}, Lemma 2.1),
that a good rate function $I(f)$  satisfies
$$
  \lim_{\v\to \infty}I((B)_\v)=I([B]),
$$
the upper bound (\ref{0.10}) is proved.
Lower bound  (\ref{0.11})
follows from (\ref{0.14}) of Lemma  \ref{Lemma2.2}.

\begin{lemma} \label{Lemma2.1}

 The rate function $I(f)$ (defined in  (\ref{N3.1})) is a good rate function, i.e.
for any $r\ge 0$  the set
$$
  B_r:=\{f\in \C:~I(f)\le r\}
$$
is a compact in $\C$ and
\begin{equation}{\label{0.7}}
  \lim _{\v\to 0}I((f)_\v)=I(f).
\end{equation}
\end{lemma}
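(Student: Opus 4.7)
The plan is to establish, in order, (i) that the limit in (\ref{N3.1}) exists and defines $I$, (ii) compactness of the level sets $B_r$ in $(\C,\r)$, and (iii) the local characterisation (\ref{0.7}).

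First I would observe that $T\mapsto I_0^T(f^{(T)})$ is non-decreasing. The restriction map $\pi:\C[0,U]\to \C[0,T]$, $\pi(h):=h|_{[0,T]}$, satisfies $\r_T(\pi h,\pi k)\le \r_U(h,k)$ and is therefore continuous, so the contraction principle applied to the LDP of condition {\bf I}, together with uniqueness of the rate function, yields
$$
I_0^T(f)=\inf\bigl\{I_0^U(h):\ h\in \C[0,U],\ h|_{[0,T]}=f\bigr\}
$$
for every $f\in \C[0,T]$. Specialising $h=f^{(U)}$ gives $I_0^T(f^{(T)})\le I_0^U(f^{(U)})$, so the limit in (\ref{N3.1}) exists in $[0,\infty]$. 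Property (\ref{0.3}) is not needed for this step, but confirms that the above infimum is actually attained by an element of $\C_0$.

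For the compactness of $B_r$, my approach is diagonal extraction. Given $\{f_n\}\subset B_r$, the monotonicity above gives $f_n^{(T)}\in B_{T,r}$ for every $T$, and each $B_{T,r}$ is compact in $\C[0,T]$ by condition {\bf I}. Diagonal extraction over $T=1,2,\dots$ produces $\{f_{n_k}\}$ together with a compatible family of $\r_T$-limits, which patch to a single $f\in \C$; lower semi-continuity (\ref{0.1}) of each $I_0^T$ then gives $I_0^T(f^{(T)})\le r$ for every $T$, hence $I(f)\le r$. The main obstacle is upgrading $\r_T$-convergence for every $T$ to $\r$-convergence on the whole half-line, and this is exactly what condition {\bf II} is designed for: both $f$ and the $f_{n_k}$ lie in $B_r^+$, so given $\v>0$ I can choose $T_0$ so large that $\sup_{h\in B_r^+}\sup_{t\ge T_0}|h(t)|/(1+t^{1+\kappa})<\v/2$, then split $\sup_{t\ge 0}$ at $T_0$, controlling the head by the already-established $\r_{T_0}$-convergence and the tail by the triangle inequality.

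For the local representation (\ref{0.7}), the inequality $\lim_{\v\to 0}I((f)_\v)\le I(f)$ is immediate since $f\in (f)_\v$. For the reverse, I would pick $g_n\in (f)_{\v_n}$ with $\v_n\downarrow 0$ realising $\lmi_{\v\to 0}I((f)_\v)$; then $g_n\to f$ in $\r$ forces $g_n^{(T)}\to f^{(T)}$ in $\r_T$ for every fixed $T$, so (\ref{0.1}) together with the monotonicity bound $I_0^T(g_n^{(T)})\le I(g_n)$ yields $I_0^T(f^{(T)})\le\lmi_n I(g_n)$. Letting $T\to\infty$ gives $I(f)\le\lmi_n I(g_n)$ and completes the argument.
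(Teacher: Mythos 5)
Your proposal is correct and follows the same overall architecture as the paper's proof (monotonicity of $T\mapsto I_0^T(f^{(T)})$, then compactness of $B_r$ from conditions {\bf I} and {\bf II}, then lower semi-continuity), but two of the three steps are implemented differently. For monotonicity, the paper argues via the local LDP and the inclusion of events $\{X^{(U)}_n\in (f^{(U)})_{U,\v}\}\subset \{X^{(T)}_n\in (f^{(T)})_{T,\v}\}$, which sandwiches $-I_0^T(f^{(T)})$ above $-I_0^U(f^{(U)})$; your contraction-principle argument (restriction $\C[0,U]\to\C[0,T]$ is $1$-Lipschitz, so by uniqueness of good rate functions $I_0^T$ equals the push-forward rate function) reaches the same consistency relation and is arguably cleaner, at the cost of invoking uniqueness of rate functions as an external fact. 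For compactness, the paper proves total boundedness constructively: condition {\bf II} fixes $T$ with uniformly small tails over $B_r$, a finite $\v$-net of the compact $B_{T,r}$ is extended by constants beyond $T$ to a $3\v$-net of $B_r$, and closedness plus completeness of $(\C,\r)$ finishes; your diagonal extraction proves sequential compactness instead, using exactly the same two ingredients (compactness of $B_{T,r}$ for the head, condition {\bf II} for the tail), which is equivalent in a metric space and avoids the explicit net construction. Your treatment of (\ref{0.7}) is essentially the paper's argument in its $\v$-neighbourhood form rather than its sequential form. The only points worth spelling out in a full write-up are the identification $B_r=B_r^+$ (so that condition {\bf II} applies both to the $f_{n_k}$ and to the limit $f$ --- the latter only after $I(f)\le r$ has been established, which your ordering handles correctly) and the trivial case $I((f)_{\v_n})=\infty$ in the final step.
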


\begin{proof}

First we show that the limit exists.
It is known (see e.g. \cite{SMG1}, Theorem 3.1 or Lemma 1.3),
that LDP implies local LDP: for any
$f\in \C[0,T]$
$$
  -I_0^T(f)\ge \lim_{\v\to 0}
  \lms_{n\to \infty}\frac{1}{n}\ln {\bf P}(X^{(T)}_n\in (f)_{T,\v})\ge
  \lim_{\v\to 0}
  \lmi_{n\to \infty}\frac{1}{n}\ln {\bf P}(X^{(T)}_n\in (f)_{T,\v})\ge
  -I_0^T(f).
$$
 For $U\ge T$,  with obvious notations,  we have for $f\in \C$
$$
  \{X^{(U)}_n\in (f^{(U)})_{U,\v}\}\subset  \{X^{(T)}_n\in (f^{(T)})_{T,\v}\},
$$
therefore
\begin{eqnarray*}
-I_0^T(f^{(T)})&\ge& \lim_{\v\to 0}
  \lms_{n\to \infty}\frac{1}{n}\ln {\bf P}(X^{(T)}_n\in (f^{(T)})_{T,\v})\\
  &\ge&\lim_{\v\to 0}
  \lmi_{n\to \infty}\frac{1}{n}\ln {\bf P}(X^{(U)}_n\in (f^{(U)})_{U,\v})\ge
  -I_0^U(f^{(U)}).
\end{eqnarray*}
Thus we established that   $I_0^T(f^{(T)})$ is non-decreasing in $T$, and \eqref{N3.1} follows. \\

Next, we show lower semi-continuity  (\ref{0.7}), that is  if $f_n\to f$, then
\begin{equation}{\label{0.8}}
\lmi_{n\to \infty}I(f_n)\ge I(f).
\end{equation}
For any $N<\infty$, $\v>0$ there is $T=T_{N,\v}<\infty$
such that
$$
  I_0^T(f)\ge \min\{I(f),N\}-\v.
$$
Since $f_n\to f$,   $\r_T(f_n,f)\to 0$.
The rate function $I_0^T(f)$ is lower semi-continuous in

$(\C[0,T],\r_T)$ (due to condition ${\bf I}$),
therefore
$$
\lmi_{n\to \infty}I(f_n)\ge \lmi_{n\to \infty}I_0^T(f_n)\ge
I_0^T(f)\ge \min\{I(f),N\}-\v.
$$
Since $N<\infty$ and $\v>0$
are arbitrary, the latter implies
(\ref{0.8}).

          We show next that the set $B_r$
          is completely bounded. For any $\v>0$  due to condition ${\bf II}$ there is $T=T_r<\infty$ such that
           for any $f\in B_r$
          \begin{equation}{\label{0.9}}
             \sup_{t\ge T}\frac{|f(t)|}{1+t^{1+\kappa}}<\v.
        \end{equation}
Denote
$$
  B_r^{(T)}:=\{f^{(T)}:~f\in B_r\},
$$
so that
$$
    B_r^{(T)}\subset B_{T,r},
$$
where we recall that $B_{T,r}:=\{f\in \C[0,T]:~I_0^T(f)\le r\}$.

Since by ${\bf I}$ the set
$B_{T,r}$ is a compact in $\C[0,T]$,
it is possible to find finite $\v$-net:
$$
  B_r^{(T)}\subset B_{T,r}\subset \cup_{i=1}^M (f_i)_{T,\v}.
$$
Now for $f\in \C[0,T]$  define $f^{(T+)}\in \C_0$
as
$$
f^{(T+)}(t):=
\left\{
           \begin{array}{lcl}
                              f(t), ~~\mbox{if} ~~ 0\le t\le T,\\
                              f(T), ~~\mbox{if} ~~ t\ge T\\
                              \end{array}
                              \right.
$$
 For any $f\in B_r$
 there is $i\in \{1,\cdots,M\}$ such that
 $$
   \sup_{0\le t\le T}\frac{|f(t)-f^{(T+)}_i(t)|}{1+t^{1+\kappa}}<\v<3\v.
 $$
 We have for this $i$ due to (\ref{0.9})
 $$
 \sup_{t\ge T}\frac{|f(t)-f^{(T+)}_i(t)|}{1+t^{1+\kappa}}\le
 \sup_{t\ge T}\frac{|f(t)|}{1+t^{1+\kappa}}+
 \sup_{t\ge T}\frac{|f(T)|}{1+t^{1+\kappa}}+
 \sup_{t\ge T}\frac{|f(T)-f^{(T+)}_i(T)|}{1+t^{1+\kappa}}\le 3\v,
 $$
therefore the collection $\{f^{(T+)}_1,\cdots,f^{(T+)}_M\}$
represents a $3\v$-net in the set $B_r$.
Thus we have shown that the set  $B_r$
 is completely bounded in $\C_0$.

 From lower semi-continuity of
 $I(f)$, established earlier, it follows  that $B_r$ is closed in $\C_0$. Since
 {\it a closed completely bounded subset of a Polish space is a compact} (see \cite{K-F}, Theorem 3, p. 109), we have shown that $B_r$ is a compact in $\C_0$,
thus completing the proof of Lemma \ref{Lemma2.1}.
\end{proof}

\begin{lemma}
\label{Lemma2.2}
 For any $f\in \C_0$, $\v>0$
          \begin{equation}{\label{0.13}}
              \lms_{n\to \infty}\frac{1}{n}\ln {\bf P}(X_n\in (f)_\v)\le
              -I((f)_{2\v}),
        \end{equation}
         \begin{equation}{\label{0.14}}
       \lmi_{n\to \infty}\frac{1}{n}
        \ln {\bf P}(X_n\in (f)_\v)\ge -I(f).
        \end{equation}
\end{lemma}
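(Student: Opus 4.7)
The plan is to prove the two bounds separately. The lower bound (\ref{0.14}) should be a routine combination of the finite-horizon LDP in condition \textbf{I} with the tail control in condition \textbf{III}. The upper bound (\ref{0.13}) is the substantive part: it will additionally require the extension property in condition \textbf{I} and the uniform tail decay for trajectories of bounded rate in condition \textbf{II}. The main obstacle will be a matching step that translates control of the $\v$-ball in $(\C[0,T],\r_T)$, the natural object arising from the finite-horizon LDP, into control of the $2\v$-neighborhood in $(\C,\r)$; the doubling $\v\mapsto 2\v$ in (\ref{0.13}) reflects the triangle-inequality slack absorbed by the tails of $f$ and of the most likely extension, uniformly over trajectories of bounded rate.

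For the lower bound, I would fix $\eta\in(0,\v/4)$ and, using $f\in\C_0$ together with condition \textbf{III}, choose $T$ so large that $\sup_{t\ge T}|f(t)|/(1+t^{1+\kappa})<\eta$ and
$$
\lms_{n\to\infty}\frac{1}{n}\ln {\bf P}\Bigl(\sup_{t\ge T}\tfrac{|X_n(t)|}{1+t^{1+\kappa}}>\eta\Bigr)\le -(I(f)+1).
$$
On the event $\bigl\{\r_T(X_n^{(T)},f^{(T)})<\v-2\eta\bigr\}\cap\bigl\{\sup_{t\ge T}|X_n(t)|/(1+t^{1+\kappa})\le \eta\bigr\}$ the triangle inequality gives $\r(X_n,f)<\v$, hence
$$
{\bf P}(X_n\in(f)_\v)\ge {\bf P}\bigl(\r_T(X_n^{(T)},f^{(T)})<\v-2\eta\bigr)-{\bf P}\Bigl(\sup_{t\ge T}\tfrac{|X_n(t)|}{1+t^{1+\kappa}}>\eta\Bigr).
$$
The LDP lower bound of condition \textbf{I}, applied to the open $(\v-2\eta)$-ball around $f^{(T)}$, yields $\lmi_{n\to\infty}\frac{1}{n}\ln{\bf P}(\r_T(X_n^{(T)},f^{(T)})<\v-2\eta)\ge -I_0^T(f^{(T)})\ge -I(f)$, the second inequality being the monotonicity proved in Lemma \ref{Lemma2.1}; the tail term is at most $e^{-(I(f)+1/2)n}$ eventually and so exponentially negligible, yielding (\ref{0.14}).

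For the upper bound, set $\overline{B}_T(\v):=\{g\in\C[0,T]:\r_T(g,f^{(T)})\le\v\}$. Since $\r_T\le\r$, the inclusion $\{X_n\in(f)_\v\}\subset\{X_n^{(T)}\in \overline{B}_T(\v)\}$ combined with the closed-set LDP upper bound of condition \textbf{I} gives, for every $T$,
$$
\lms_{n\to\infty}\frac{1}{n}\ln{\bf P}(X_n\in(f)_\v)\le -I_0^T(\overline{B}_T(\v)).
$$
The plan will then be to establish, for every fixed $r>0$ and all $T$ sufficiently large (depending on $r$ and $f$), the key inequality $I_0^T(\overline{B}_T(\v))\ge\min\{r,\,I((f)_{2\v})\}$; letting $r\to\infty$ finishes the proof. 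To prove it, I would take any $g\in \overline{B}_T(\v)$ with $I_0^T(g)\le r$ (if none exists the bound is trivial) and use condition \textbf{I} to produce an extension $\tilde g=g_g\in\C_0$ with $\tilde g^{(T)}=g$ and $I_0^U(\tilde g^{(U)})=I_0^T(g)$ for every $U\ge T$; passing to the limit in $U$ via Lemma \ref{Lemma2.1} gives $I(\tilde g)=I_0^T(g)\le r$, so $\tilde g\in B_r^+$. The estimate
$$
\r(\tilde g,f)\le \max\Bigl(\r_T(g,f^{(T)}),\ \sup_{t\ge T}\tfrac{|\tilde g(t)|}{1+t^{1+\kappa}}+\sup_{t\ge T}\tfrac{|f(t)|}{1+t^{1+\kappa}}\Bigr)
$$
then combines with $f\in\C_0$ and condition \textbf{II} on $B_r^+$ to force both tail sups below $\v/2$ (by further enlarging $T$, uniformly in $g$ for the first), giving $\r(\tilde g,f)\le\v<2\v$. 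Hence $\tilde g\in(f)_{2\v}$ and $I((f)_{2\v})\le I(\tilde g)=I_0^T(g)$; taking the infimum over $g$ completes the argument.
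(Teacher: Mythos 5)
Your proof is correct. The lower bound is essentially the paper's own argument: the same intersection of the finite-horizon event on $[0,T]$, the tail event controlled by condition {\bf III}, and the deterministic smallness of the tail of $f$, followed by the LDP lower bound of condition {\bf I} on an open ball (do note explicitly that (\ref{0.14}) is vacuous when $I(f)=\infty$, since your choice of $T$ via condition {\bf III} at level $I(f)+1$ presumes $I(f)<\infty$). Your upper bound, however, takes a genuinely different route. The paper stays probabilistic throughout: after reducing to ${\bf P}(X_n^{(T)}\in (f^{(T)})_{T,\v})$ it converts $I_0^T$ of the finite-horizon neighbourhood into $I$ of its cylinder set via the extension identity $I_0^T(B)=I(B^{(T+)})$, then covers that cylinder by $(f)_{\v+2\d}\cup R(T,\v)$, where $R(T,\v)$ is the set of paths with large tails beyond $T$, and forces $I(R(T,\v))\ge N$ by combining condition {\bf III} with the already-established lower bound (\ref{0.14}). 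You instead truncate at rate level $r$, use the extension property of condition {\bf I} to place the most likely extension $\tilde g$ of each $g$ in the closed $\v$-ball with $I_0^T(g)\le r$ into $B_r^+$, and invoke the uniform tail decay of condition {\bf II} over $B_r^+$ (which is exactly what legitimises the bound even though $\tilde g$ varies with $g$ and $T$) to land $\tilde g$ directly in $(f)_{2\v}$, before sending $r\to\infty$. Thus you use condition {\bf II} where the paper uses condition {\bf III} together with the lower bound; your argument is more self-contained for (\ref{0.13}) and makes the origin of the doubling $\v\mapsto 2\v$ transparent, while the paper's keeps condition {\bf II} confined to the compactness argument of Lemma \ref{Lemma2.1}. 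Both routes are sound.
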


\begin{proof} $(i)$. First we prove the lower bound (\ref{0.14}) as it is also used in the proof of the upper bound.
If $I(f)=\infty$, then (\ref{0.14}) is trivially satisfied.
Let now  $I(f)<\infty$.
For any $T\in (0,\infty)$ there holds the inclusion
$$
  \{X_n\in (f)_\v\}\supset 
  A_n(T)\cap B_n(T)\cap  D(T),
$$
where
$$
  A_n(T):=\left\{\sup_{0\le t\le T}\frac{|X_n(t)-f(t)|}{1+t^{1+\kappa}}<\v\right\},~~~
B_n(T):=\left\{\sup_{ t\ge T}\frac{|X_n(t) |}{1+t^{1+\kappa}}<\v/4\right\},
$$
$$
D(T):=\left\{\sup_{ t\ge T}\frac{|f(t)|}{1+t^{1+\kappa}}<\v/2\right\}.
$$
For a large   $T$ the event $D(T)$
is a certainty (due to $I(f)<\infty$).
Therefore there exists $T_0<\infty$, such that for all $T\ge T_0$ it holds that
\begin{equation}{\label{0.15}}
{\bf P}(X_n\in (f)_\v)\ge {\bf P}(A_n(T)\cap B_n(T))\ge
{\bf P}(A_n(T))-{\bf P}(\overline{B_n}(T)),
        \end{equation}
where $\overline{B_n}(T)$ is a complement of $B_n(T)$.
Due to condition  ${\bf III}$ there is $T\ge T_0$
such  that
\begin{equation}{\label{0.16}}
\lms_{n\to \infty}\frac{1}{n}\ln {\bf P}(\overline{B_n}(T))\le -2I(f),
        \end{equation}
and for this $T$ due to  ${\bf I}$ we have
\begin{equation}{\label{0.17}}
\lmi_{n\to \infty}\frac{1}{n}\ln {\bf P}(A_n(T))\ge -I_0^T(f^{(T)})\ge -I(f).
        \end{equation}
(\ref{0.14})  now follows from (\ref{0.15}) by using   (\ref{0.16}), (\ref{0.17}).

$(ii)$. Now we prove the upper bound (\ref{0.13}).
It is obvious that for any $T\in (0,\infty)$
$$
  {\bf P}(X_n\in (f)_\v)\le  {\bf P}(X^{(T)}_n\in (f^{(T)})_{T,\v}),
$$
where \ we recall  \ that  $(f)_{T,\v}$
denote $\v$-neighbourhood in metric $\r_T$ in space $\C[0,T]$
of $f\in \C[0,T]$.

Due to condition ${\bf I}$ for any $\d>0$
\begin{eqnarray*}
L(\v):= \lms_{n\to \infty} \frac{1}{n}\ln {\bf P}(X_n\in (f)_\v)&\le&
\lms_{n\to \infty} \frac{1}{n}\ln {\bf P}(X^{(T)}_n\in (f^{(T)})_{T,\v})\\
&\le& -I_0^T([(f^{(T)})_{T,\v}]_T)\le -I_0^T((f^{(T)})_{T,\v+\d}).
\end{eqnarray*}
For any $T\in (0,\infty)$ and chosen $\v$  and $\d$, in this way we have the inequality
\begin{equation}{\label{0.18}}
L(\v)\le -I_0^T((f^{(T)})_{T,\v+\d}).
        \end{equation}
Choose now $T<\infty$ so large, that simultaneously the following holds:
\begin{equation}{\label{0.19}}
\sup_{t\ge T}\frac{|f(t)|}{1+t^{1+\kappa}}<\d;
        \end{equation}
\begin{equation}{\label{0.20}}
\lms_{n\to \infty}\frac{1}{n}\ln {\bf P}(X_n\in R(T,\v))\le -N,
        \end{equation}
where $N<\infty$ is arbitrary, and
$$
  R(T,\v):=\left\{g\in \C_0:~\sup_{t\ge T}\frac{|g(t)|}{1+t^{1+\kappa}}>\v\right\}.
$$
Denote
$$
  ({(f^{(T)})_{T,\v+\d}})^{(T+)}:=\{g\in \C_0:~g^{(T)}\in (f^{(T)})_{T,\v+\d}\}.
$$

Next we show that\begin{equation}{\label{0.4}}
I_0^T(B)=I(B^{(T+)}),
 \end{equation}
where  for $B\subset \C[0,T]$  $$
  B^{(T+)}:=\{g\in \C_0:~g^{(T)}\in B\}.
$$

Indeed, for any $\v>0$ let $f\in B$  be such that
$$
    I_0^T(f)\le I_0^T(B)+\v.
$$
Then due to  (\ref{0.3}) in condition ${\bf I}$
there is $g\in \C_0$ such that $g^{(T)}=f$
(consequently $g\in B^{(T+)}$) with
$I(g)=I_0^T(f)$.
Therefore
$$
   I_0^T(B)+\v\ge I_0^T(f)=I(g) \ge I(B^{(T+)}).
$$
Since $\v>0$  is arbitrary,
\begin{equation}{\label{0.5}}
I_0^T(B)\ge I(B^{(T+)}).
        \end{equation}
Let now $g\in B^{(T+)}$ such that
$$
    I(g)\le I(B^{(T+)})+\v.
$$
Then $g^{(T)}\in B$ with $I_0^T(g^{(T)})\le I(g)$.
Therefore
$$
  I(B^{(T+)})+\v\ge  I(g)\ge  I_0^T(g^{(T)})\ge I_0^T(B),
$$
and
\begin{equation}{\label{0.6}}
I_0^T(B)\le I(B^{(T+)}).
        \end{equation}
Inequalities (\ref{0.5}),  (\ref{0.6})
now prove equality (\ref{0.4}).\\

Due to  (\ref{0.4}) we have
$$
 I_0^T((f^{(T)})_{T,\v+\d})=I(({(f^{(T)})_{T,\v+\d}})^{(T+)}),
$$
therefore due to (\ref{0.18})
\begin{equation}{\label{0.21}}
L(\v)\le -I(({(f^{(T)})_{T,\v+\d}})^{(T+)}).
        \end{equation}
Take an arbitrary $g\in ({(f^{(T)})_{T,\v+\d}})^{(T+)}$. Then either
$$
  \sup_{t\ge T}\frac{|g(t)-f(t)|}{1+t^{1+\kappa}}<\v+2\d,
$$
and then
\begin{equation}{\label{0.22}}
  g\in (f)_{\v+2\d};
\end{equation}
or
\begin{equation}{\label{0.23}}
  \sup_{t\ge T}\frac{|g(t)-f(t)|}{1+t^{1+\kappa}}\ge\v+2\d,
\end{equation}
and then
\begin{equation}{\label{0.24}}
\sup_{t\ge T}\frac{|g(t)|}{1+t^{1+\kappa}}\ge \v+\d,
 \end{equation}
and
\begin{equation}{\label{0.25}}
  g\in R(T,\v).
\end{equation}
To clarify deduction of  (\ref{0.24}) from  (\ref{0.23}), note that if the inequality
(\ref{0.24}) is not true, then the opposite holds
$$
\sup_{t\ge T}\frac{|g(t)|}{1+t^{1+\kappa}}< \v+\d,
$$
and due to   (\ref{0.19})
$$
\sup_{t\ge T}\frac{|g(t)-f(t)|}{1+t^{1+\kappa}}\le
\sup_{t\ge T}\frac{|g(t)|}{1+t^{1+\kappa}}+
\sup_{t\ge T}\frac{|f(t)|}{1+t^{1+\kappa}}<\v+\d+\d=\v+2\d,
$$
which contradicts  (\ref{0.23}).
We have proved (see (\ref{0.22}) and
(\ref{0.25})),
 that
$$
  ({(f^{(T)})_{T,\v+\d}})^{(T+)}\subset (f)_{\v+2\d}\cup R(T,\v).
$$
From the latter we obtain
\begin{equation}{\label{0.26}}
I(({(f^{(T)})_{T,\v+\d}})^{(T+)})\ge \min\{I((f)_{\v+2\d}),I(R(T,\v))\}.
\end{equation}
Further, due to  (\ref{0.20})
$$
 -N\ge \lms_{n\to \infty}\frac{1}{n}\ln {\bf P}(X_n\in R(T,\v))\ge
\lmi_{n\to \infty}\frac{1}{n}\ln {\bf P}(X_n\in R(T,\v))\ge
-I(R(T,\v)),
$$
where the last inequality for an open set
$R(T,\v)$
follows from the established lower bound (\ref{0.14}).
Therefore
$$
  I(R(T,\v))\ge N,
$$
and, in view of  (\ref{0.26}),
$$
I(({(f^{(T)})_{T,\v+\d}})^{(T+)})\ge \min\{I((f)_{\v+2\d}),N\}.
$$
Going back to (\ref{0.21}), we obtain the inequality
$$
 L(\v)\le  -\min\{I((f)_{\v+2\d}),N\},
$$
in which $\d>0$  and $N<\infty$ are arbitrary.
Taking $2\d=\v$ and sending $N$ to $\infty$,
we obtain the required upper bound
$$
   L(\v)\le  -I((f)_{2\v}).
$$
Lemma \ref{Lemma2.2} is now proved.
\end{proof}

Local LDP for $\{X_n\}$ in $\C_0$ follows from
\ref{Lemma2.2}, and is stated as a corollary.

\begin{corollary} For any $f\in \C_0$
         $$
             \lim_{\v\to 0} \lms_{n\to \infty}\frac{1}{n}\ln {\bf P}(X_n\in (f)_\v)\le
              -I(f),
        $$
        $$
      \lim_{\v\to 0} \lmi_{n\to \infty}\frac{1}{n}
        \ln {\bf P}(X_n\in (f)_\v)\ge -I(f).
       $$
\end{corollary}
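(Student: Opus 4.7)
The corollary is essentially a direct distillation of Lemmas \ref{Lemma2.1} and \ref{Lemma2.2}, so the plan is short and contains no real obstacle.

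For the lower bound, I would simply observe that the right-hand side of \eqref{0.14} in Lemma \ref{Lemma2.2}, namely $-I(f)$, does not depend on $\v$. Hence
$$
\lmi_{n\to\infty}\frac{1}{n}\ln \mathbf{P}(X_n\in (f)_\v) \ge -I(f)
$$
for every $\v>0$, and so the same inequality survives after passing to $\lim_{\v\to 0}$.

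For the upper bound, I would start from \eqref{0.13} of Lemma \ref{Lemma2.2}, which gives
$$
\lms_{n\to\infty}\frac{1}{n}\ln \mathbf{P}(X_n\in (f)_\v) \le -I\bigl((f)_{2\v}\bigr)
$$
for every $\v>0$. I would then invoke the lower semi-continuity of $I$ established in \eqref{0.7} of Lemma \ref{Lemma2.1}, namely $\lim_{\v\to 0}I((f)_\v)=I(f)$. Replacing $\v$ by $2\v$ in that identity gives $\lim_{\v\to 0}I((f)_{2\v})=I(f)$, so passing to $\lim_{\v\to 0}$ in the displayed inequality yields
$$
\lim_{\v\to 0}\lms_{n\to\infty}\frac{1}{n}\ln \mathbf{P}(X_n\in (f)_\v) \le -I(f),
$$
as required.

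The only subtlety worth highlighting is that the use of Lemma \ref{Lemma2.1} is legitimate for $f\in\C_0$, since $\C_0\subset\C$ and the lemma is stated on $\C$; once this is observed, no further work is needed. There is no real obstacle: both limits in the statement are immediate consequences of the two preceding lemmas, and the factor $2$ in front of $\v$ in \eqref{0.13} is absorbed by the monotonicity of $\v\mapsto I((f)_\v)$ together with the limit identity \eqref{0.7}.
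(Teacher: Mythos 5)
Your proposal is correct and matches the paper's intent: the paper simply asserts that the corollary follows from Lemma \ref{Lemma2.2}, and your argument fills in exactly the expected details, namely that the lower bound is immediate from (\ref{0.14}) since its right-hand side is independent of $\v$, while the upper bound combines (\ref{0.13}) with the identity $\lim_{\v\to 0}I((f)_\v)=I(f)$ from (\ref{0.7}) of Lemma \ref{Lemma2.1}.
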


Next result proves a weaker form of exponential tightness: for any $N$ there is a completely bounded set $K_N$ in  $(\C_0,\r)$ such that
 $$
   \lms_{n\to \infty}\frac{1}{n}\ln {\bf P}(X_n\not \in K)\le -N.
 $$

\begin{lemma} \label{Lemma2.3}
For any $N<\infty$ and $\v>0$
there is a finite collection of  $g_1,\cdots, g_M\in \C_0$
such that
 $$
             \lms_{n\to \infty}\frac{1}{n}
             \ln {\bf P}(X_n\not\in \cup_{i=1}^M(g_i)_\v)\le -N.
   $$
\end{lemma}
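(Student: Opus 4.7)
My plan is a head-tail decomposition: cover the projected process $X_n^{(T)}$ on the head $[0,T]$ via compactness of the $N$-level set of $I_0^T$ (condition {\bf I}) together with the LDP upper bound there, and suppress the tail $[T,\infty)$ using conditions {\bf II} and {\bf III}. The key point requiring care is that the centres $g_i$ only exist after $T$ has been fixed; consequently $T$ must be chosen using the uniform tail bound of condition {\bf II} over the whole sub-level set $B_N^+$, rather than over specific $g_i$'s, and the $g_i$'s will then belong to $B_N^+$ automatically.

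Concretely, I would first pick $T^\ast<\infty$ by condition {\bf II} with $r=N$ so that $\sup_{g\in B_N^+}\sup_{t\ge T^\ast}|g(t)|/(1+t^{1+\kappa})<\v/4$, and then pick $T\ge T^\ast$ by condition {\bf III} with threshold $\v/4$ and level $N$ so that $\lms_{n\to\infty}\frac{1}{n}\ln{\bf P}\bigl(\sup_{t\ge T}|X_n(t)|/(1+t^{1+\kappa})>\v/4\bigr)\le -N$; the condition {\bf II} estimate survives enlargement since the supremum over $t\ge T$ is monotone non-increasing in $T$. Next, using compactness of $B_{T,N}\subset\C[0,T]$ from condition {\bf I}, I would cover $B_{T,N}\subset\cup_{i=1}^{M}(f_i)_{T,\v/4}$ with centres $f_i\in B_{T,N}$, and apply the extension property in condition {\bf I} to each $f_i$ to produce $g_i\in\C_0$ with $g_i^{(T)}=f_i$ and, by (\ref{N3.1}) together with (\ref{0.3}), $I(g_i)=I_0^T(f_i)\le N$. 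Hence $g_i\in B_N^+$ and $\sup_{t\ge T}|g_i(t)|/(1+t^{1+\kappa})<\v/4$ for each $i$.

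For the combination step, let $V$ be the open $\v/4$-neighbourhood of $B_{T,N}$ in $(\C[0,T],\r_T)$; then $V^c$ is closed and $\inf_{V^c}I_0^T\ge N$, so the LDP upper bound of condition {\bf I} gives $\lms_{n\to\infty}\frac{1}{n}\ln{\bf P}(X_n^{(T)}\notin V)\le -N$. On the intersection of $\{X_n^{(T)}\in V\}$ and $\{\sup_{t\ge T}|X_n(t)|/(1+t^{1+\kappa})\le\v/4\}$, the triangle inequality produces some $i$ with $\r_T(X_n^{(T)},f_i)<\v/2$, while for $t\ge T$ one has $|X_n(t)-g_i(t)|/(1+t^{1+\kappa})\le\v/4+\v/4=\v/2$; combined with $g_i^{(T)}=f_i$ on $[0,T]$, this yields $\r(X_n,g_i)\le\v/2<\v$, i.e.\ $X_n\in(g_i)_\v$. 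Taking the contrapositive and using $\lms_{n\to\infty}\frac{1}{n}\ln(a_n+b_n)\le\max\{\lms\frac{1}{n}\ln a_n,\lms\frac{1}{n}\ln b_n\}$ concludes the argument. The only real obstacle is synchronising $T$ across the three conditions in the correct order, which is resolved by exploiting the uniformity of condition {\bf II} over $B_N^+$.
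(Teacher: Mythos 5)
Your proof is correct, and while it shares the overall head--tail architecture with the paper's argument (finite net on $[0,T]$, extension of the centres to $\C_0$, union bound combining a head estimate with the condition \textbf{III} tail estimate), it differs in the two key technical steps. For the head, the paper invokes exponential tightness of $X_n^{(T)}$ in $\C[0,T]$ as a consequence of the LDP (citing a theorem of Puhalskii) to produce a compact $K$ with ${\bf P}(X_n^{(T)}\not\in K)$ exponentially small, and then takes a net of $K$; you instead take the net directly on the level set $B_{T,N}$ (compact by goodness of $I_0^T$) and get the exponential bound by applying the LDP upper bound to the closed complement of its $\v/4$-neighbourhood $V$, using $\inf_{V^c}I_0^T\ge N$. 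Your version is more self-contained --- it needs nothing beyond conditions \textbf{I}--\textbf{III} --- at the cost of requiring the centres to lie in a sub-level set rather than in an arbitrary compact. For the extension, the paper freezes $f_i$ at its value $f_i(T)$ and controls the tail of the resulting $g_i$ by hand (via the index set ${\mathcal M}_\v$ and a second appeal to the event $R_T(\v)$), whereas you use the most likely extension of (\ref{0.3}) so that $g_i\in B_N^+$ and then control its tail uniformly by condition \textbf{II}; this is why your proof uses condition \textbf{II} while the paper's proof of this lemma does not. Both routes are valid; your ordering of the choices (first $T^\ast$ from \textbf{II}, then $T\ge T^\ast$ from \textbf{III}, exploiting monotonicity in $T$ of both estimates, and only then the net) correctly resolves the circularity you flag, and the final radius $\v/2<\v$ even improves slightly on the paper's $3\v$ (which is immaterial since $\v$ is arbitrary).
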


\begin{proof} Denote by
$$
  R_T(\v):=\left\{f\in \C_0:~\sup_{t\ge T}\frac{|f(t)|}{1+t^{1+\kappa}}\le \v\right\}.
$$
Then due to condition  ${\bf III}$
there is $T<\infty$ such that
\begin{equation}{\label{0.27}}
\lms_{n\to \infty}\frac{1}{n}\ln {\bf P}(X_n\not \in R_T(\v))\le -N.
        \end{equation}
For this $T$ due to condition ${\bf I}$ the process
$X^{(T)}_n$ satisfies LDP in the space $\C[0,T]$.
Therefore for a chosen $N$ by a theorem of Puhalskii
(see  \cite{Puh2}  {\bf page or Theorem number} )
there is a compact $K\subset \C[0,T]$
such that
$$
  \lms_{n\to \infty}\frac{1}{n}\ln {\bf P}(X^{(T)}_n\not \in K)\le -N.
$$
For a given $\v>0$
take a finite $\v$-net $f_1,\cdots,f_M\in \C[0,T]$
in $K$:
$$
  K\subset \cup_{i=1}^M(f_i)_{T,\v}.
$$
Then
\begin{equation}{\label{0.28}}
                          \lms_{n\to \infty}\frac{1}{n}
             \ln {\bf P}(X^{(T)}_n\not \in \cup_{i=1}^M(f_i)_{T,\v})\le -N.
        \end{equation}
Denote for all  $i=1,\cdots,M$
$$
  {g}_i(t):=
  \left\{
           \begin{array}{lcl}
                              f_i(t), ~~\text{if} ~~ 0\le t\le T,\\
                              f_i(T), ~~\text{if} ~~ t\ge T.\\
                              \end{array}
                              \right.
$$
Define the set ${\mathcal M}_\v:=\{i\in \{1,\cdots,M\}:~\frac{|f_i(T)|}{1+T^{1+\kappa}}\le 2\v\}$.
Then
$$
 P:={\bf P}(X_n\not \in \cup_{i=1}^M({g}_i)_{3\v}) \le
 {\bf P}(X_n\not \in R_T(\v))+{\bf P}(X_n \in R_T(\v),~X^{(T)}_n\not \in \cup_{i=1}^M(f_i)_{T,\v})+
 $$
 $$
 {\bf P}(X_n \in R_T(\v),~X^{(T)}_n\in \cup_{i=1}^M(f_i)_{T,\v},~~
 X_n\not \in \cup_{i=1}^M({g}_i)_{3\v})=:P_1+P_2+P_3.
$$
We bound  $P_3$ as follows:
$$
 P_3\le
 \sum_{i\in {\mathcal M}_\v}{\bf P}\left(\sup_{t\ge T}\frac{|X_n(t)-f_i(T)|}
 {1+t^{1+\kappa}}>3\v\right)
 \le M {\bf P}\left(\sup_{t\ge T}\frac{|X_n(t)|}{1+t^{1+\kappa}}>\v\right)\le
   M {\bf P}(X_n\not \in R_T(\v)).
$$
Since
$$
   P_1\le {\bf P}(X_n\not \in R_T(\v)),
$$
we obtain
\begin{equation}{\label{0.29}}
 P\le {\bf P}(X^{(T)}_n\not \in \cup_{i=1}^M(f_i)_{T,\v})+(M+1){\bf P}(X_n\not \in R_T(\v)).
        \end{equation}
Using bounds  (\ref{0.27}) and (\ref{0.28}) with (\ref{0.29}),
we obtain the required inequality
$$
  \lms_{n\to \infty}\frac{1}{n}\ln {\bf P}(X_n\not \in
  \cup_{i=1}^M({g}_i)_{3\v})\le -N.
$$
Lemma \ref{Lemma2.3} is now proved.
\end{proof}

\section{Large Deviations for Random Walks}

\subsection{Large Deviation Principle for Random Walks on half-line.}

Let  $\x$ be a non-degenerate random variable satisfying
  the following condition

$[{\bf C}_\infty]$. {\it For any $\l\in \R$}
$$
  {\bf E}e^{\l\x}<\infty.
$$

Denote
$$
  \Lambda(\a):=\sup_\l \{\l\a-A(\l)\},~~~A(\l):=\ln {\bf E}e^{\l\x},
$$
the deviation function of $\x$.
It is a convex non-negative lower-semiconscious function with a single zero at $\a={\bf E}\x$, (see e.g.
\cite{BB} or \cite{BBM}).

Denote
$$
  S_0:=0,~~~S_k:=\x_1+\cdots+\x_k~~~\mbox{for}~~~k\ge 1,
$$
where $\{\x_n\}$ is a sequence of i.i.d. copies of $\x$. Consider a random piece-wise linear function $s_n=s_n(t)\in \C$,
 going through the nodes
 $$
   \left({\frac{k}{n},~\frac{S_k}{x}}\right),~~~k=0,1,\cdots,
 $$
 where $x=x(n)$ is a fixed sequence of positive constants such that $x\sim n$ as $n\to \infty$.
 The rate function corresponding to the process $s_n$ is defined as
 $$
  I(f):=
  \left\{
           \begin{array}{lcl}
                              \int_0^\infty  \Lambda(f'(t)) dt, ~~ ~~~f(0)=0,~~~f~~ \text{is absolutely continuous},\\
                              +\infty, \text{otherwise.}\\
                              \end{array}
                              \right.
$$

\begin{theorem} \label{thm3.1} Assume $[{\bf C}_\infty]$.
 Then $s_n$ satisfies LDP in space
 $(\C,~\r_\kappa)$ for $\kappa=0$ with rate function $I$.
 \end{theorem}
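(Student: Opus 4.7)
The strategy is to apply Theorem~\ref{thm} by verifying hypotheses \textbf{I}, \textbf{II}, \textbf{III} for $s_n$; the limit rate function $I(f)=\int_0^\infty\Lambda(f'(s))\,ds$ is then identified by monotone convergence of $I_0^T(f^{(T)})=\int_0^T\Lambda(f'(s))\,ds$ in $T$.

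For Condition \textbf{I}, I would invoke the classical Mogulskii sample-path LDP for random walks: under $[{\bf C}_\infty]$, $s_n^{(T)}$ satisfies LDP in $\C[0,T]$ with the uniform metric and rate function $I_0^T(f)=\int_0^T\Lambda(f'(s))\,ds$. Since $1\le 1+t\le 1+T$ on $[0,T]$, the metric $\r_T$ and the uniform metric are equivalent, so the LDP transfers verbatim. The extension requirement \eqref{0.3} is met by continuing $f\in \C[0,T]$ linearly with slope ${\bf E}\x$ past time $T$; as $\Lambda({\bf E}\x)=0$ this preserves the rate function, and (after the standard centring ${\bf E}\x=0$ for which $\C_0$ is designed with $\kappa=0$) the extension lies in $\C_0$.

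For Condition \textbf{II}, for $f\in B_r^+$ (necessarily absolutely continuous with $f(0)=0$ and $\int_0^\infty\Lambda(f'(s))\,ds\le r$) Jensen's inequality applied to the convex $\Lambda$ gives $\Lambda\!\bigl(f(t)/t\bigr)\le r/t$. Under $[{\bf C}_\infty]$ the deviation function $\Lambda$ is finite everywhere, strictly convex with unique zero at $0$ and superlinear in $|\a|$, so $|f(t)|/t$ is majorised by a deterministic $\omega_r(t)\downarrow 0$, uniformly in $f\in B_r^+$. Dividing by $1+t$ gives the uniform decay at infinity.

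Condition \textbf{III} is the main obstacle. I would dyadically decompose $[T,\infty)=\bigcup_{k\ge 0}[2^kT,2^{k+1}T]$ and, on each block, combine an Etemadi/Doob-type maximal inequality for random walks with Cram\'er's estimate ${\bf P}(S_m\ge a)\le e^{-m\Lambda(a/m)}$. Because $x\sim n$ and $1+t\asymp 2^kT$ on the $k$-th block, one obtains, for large $n$,
\[
{\bf P}\Bigl(\sup_{t\in[2^kT,2^{k+1}T]}\tfrac{|s_n(t)|}{1+t}>\v\Bigr)\le C\exp\!\bigl(-n\cdot 2^kT\cdot\Lambda(c\v)\bigr),
\]
for some constant $c>0$. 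Summing the geometric series in $k$, dividing by $n$, and sending $n\to\infty$ then $T\to\infty$ delivers the required bound $\le -N$ for any prescribed $N<\infty$. The delicacy is that with $\kappa=0$ the weight grows only linearly, so the Cram\'er exponent must absorb the maximal-inequality prefactors and the sum over blocks; superlinearity of $\Lambda$ guaranteed by $[{\bf C}_\infty]$ is precisely what makes this possible.
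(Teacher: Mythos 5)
Your overall strategy is the paper's: verify Conditions \textbf{I}--\textbf{III} of Theorem \ref{thm}. For \textbf{I} and \textbf{II} you essentially reproduce the paper's argument (your Jensen step is exactly the paper's ``straightening'' bound $I_0^T(f)\ge T\Lambda(f(T)/T)$, and your ``sublevel sets of $\Lambda$ shrink to $\{0\}$'' is the qualitative form of the paper's quantitative inequality $\Lambda(\a)\ge h(|\a|)|\a|$, which gives the explicit rate $|f(T)|\le\sqrt{rT/\d}$). The genuine divergence is in \textbf{III}: the paper uses no blocks and no maximal inequality. Since $s_n$ is piecewise linear, the supremum of $|s_n(t)|/(1+t)$ over $t\ge T$ is controlled by the values at the lattice points $k/n$, and the paper applies Chernoff's bound to every $S_k$, $k\ge 1$, separately; the lower bound $\Lambda(\a)\ge h(|\a|)|\a|$ makes the exponent of the $k$-th term at least $c(T+k)$, so the union bound sums to $C e^{-cT}$. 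Your dyadic decomposition with an Etemadi/Doob maximal inequality also works (on the $k$-th block the exponent is of order $n\,2^kT\,\Lambda(c\v)$; there you only need $\Lambda(c\v)>0$ together with the monotonicity of $m\mapsto m\Lambda(a/m)$ to dominate $\max_{m\le M}{\bf P}(|S_m|\ge a)$, so superlinearity is less essential than you suggest). Your route is more robust --- it does not use the piecewise-linear structure --- at the cost of importing a maximal inequality; notably, the paper itself switches to exactly such a Kolmogorov-type maximal inequality in the moderate-deviation analogue (Theorem 3.2), where the pointwise lattice union bound is too crude.

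One point you should make explicit rather than parenthetical: the reduction to ${\bf E}\x=0$ must be performed globally, not merely inside Condition \textbf{I}. For a non-centred walk with $\kappa=0$, Conditions \textbf{II} and \textbf{III} are false as stated: $f(t)=at$ lies in $B_r^+$ but $|f(t)|/(1+t)\to|a|\neq 0$, and $s_n(t)/(1+t)\to a$ makes the probability in \textbf{III} close to one for $\v<|a|$. The paper therefore first proves the theorem for the centred variable $\x-a$ and then transfers the LDP to $s_n=s_n^{(0)}+e_a$, using that the translation $f\mapsto f+e_a$ is an isometry of $(\C,\r_0)$ and maps closures and interiors onto closures and interiors. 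With that opening reduction added, your argument is sound.
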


%
%

\begin{proof}  Without loss of generality we can take ${\bf E}\x=0$. This is because  the deviation function for $\x^{(0)}:=\x-a$
 is given by
 $
   \Lambda^{(0)}(\a)=\Lambda(\a+a).
 $ ( superscript $^{(0)}$ denotes quantities for
 the centered random variable).
 Therefore the rate function for $s^{(0)}_n$, is given by
 $
  I^{(0)}(f)=I(f+e_a),
 $
 where $e_a=e_a(t):=at;~t\ge 0$. Clearly, $s_n=s^{(0)}_n+e_a$,
 $
  {\bf P}(s_n\in B)={\bf P}(s^{(0)}_n\in B-e_a),
 $
where $B-e_a:=\{f-e_a:~f\in B\}$. It is obvious that
 $
  [B-e_a]=[B]-e_a,~~~(B-e_a)=(B)-e_a,
 $
implying
 $
  I^{(0)}([B-e_a])=I([B]),~~~I^{(0)}((B-e_a))=I((B)).
  $
Hence  the LDP for $s^{(0)}_n$  with rate function
$I^{(0)}$  implies LDP for  $s_n$   with rate function $I$.

The rest of the proof
 consists in checking conditions ${\bf I}-{\bf III}$ of Theorem \ref{thm}.
Condition ${\bf I}$ follows from the LDP for
$s_n$ in  $\C[0,1]$ (see \cite{BBM}, Theorem 9 or \cite{SMG1}, Section 6.2).

Proof of ${\bf II}$.
By $[{\bf C}_\infty]$, with ${\bf E}\x=0$ it follows that there exists a   non-decreasing continuous function
$h(t);~t\ge 0$, such that for some $\d>0$, $h(t)=\d t$, if $0\le t\le 1$,
$\lim_{t\to \infty}h(t)=\infty$, and that for all
$\a\in \R$ the following inequality holds
\begin{equation}{\label{1.1}}
\Lambda(\a)\ge h(|\a|)|\a|.
\end{equation}

Indeed, for $\a\to 0$   (see e.g.
 \cite{BBM}, p.21)
\begin{equation}{\label{1.2}}
\Lambda(\a)\sim \frac{\a^2}{2\sigma^2};
\end{equation}
for any $\l>0$, $\a>0$
$$
  \Lambda(\a)\ge \l\a-A(\l),~~~\Lambda(-\a)\ge \l\a-A(-\l).
$$
Therefore
$$
  \lmi_{|\a|\to \infty}\frac{\Lambda(\a)}{|\a|}\ge \l,
$$
so that
\begin{equation}{\label{1.3}}
\lmi_{|\a|\to \infty}\frac{\Lambda(\a)}{|\a|}=\infty.
\end{equation}
(\ref{1.1}) follows from (\ref{1.2}) and (\ref{1.3}).

Denote by
$$
  f_T(t):=t\frac{f(T)}{T},~~~t\in [0,T].
$$
The function $f_T$ ``straightens" function $f$ on $[0,T]$:
$$
  I_0^T(f)\ge I_0^T(f_T)=\int_0^T\Lambda\left(\frac{f(T)}{T}\right)dt=
  T\Lambda\left(\frac{f(T)}{T}\right).
$$
Therefore by (\ref{1.1}) for $f\in B_r$
$$
  r\ge T\Lambda\left(\frac{f(T)}{T}\right)\ge T \frac{|f(T)|}{T}h\left(\frac{|f(T)|}{T}\right),
  $$
so that
\begin{equation}{\label{1.4}}
   \frac{|f(T)|}{T}\le \frac{r}{Th\left(\frac{|f(T)|}{T}\right)}.
\end{equation}
Let $c:=\sqrt{\frac{r}{\d}}$, and $T$ be such that $\frac{c}{\sqrt{T}}\le 1$.
Assume that
$$
  \frac{|f(T)|}{T}>\frac{c}{\sqrt{T}}.
$$
Then it follows from $(\ref{1.4})$
$$
  \frac{|f(T)|}{T}\le \frac{r}{Th(\frac{c}{\sqrt{T}})}=\frac{r}{T\d\frac{c}{\sqrt{T}}}=\frac{c}{\sqrt{T}},
$$
which is a contradiction.
Thus for
  $T\ge c^2=\frac{r}{\d}$ it holds
$$
  |f(T)|\le \sqrt{\frac{r}{\d}}\sqrt{T}.
$$
Clearly,  $B_r={B}^+_r$. Therefore we have proved
$$
  \sup_{f\in B^+_r} \sup_{t\ge T}\frac{|f(t)|}{1+t}\le
  \sqrt{\frac{r}{\d}}\frac{\sqrt{T}}{1+T}\le \sqrt{\frac{r}{\d}}\frac{1}{\sqrt{T}}.
$$
Condition ${\bf II}$   now follows.

Check now condition ${\bf III}$. For $T_n:=\max\{\frac{k}{n}\le T:~k=1,2,\cdots\}$,
we have
\begin{eqnarray*}
  {\bf P}\left(\sup_{t\ge T}\frac{|s_n(t)|}{1+t}>\v\right)&\le&
  {\bf P}\left(\sup_{t\ge T_n}\frac{|s_n(t)|}{1+t}>\v\right)\\
  &\le&{\bf P}\left(\sup_{t\ge T_n}\frac{|s_n(t)-s_n(T_n)|}{1+t}>\v/2\right)+
  {\bf P}\left(\sup_{t\ge T_n}\frac{|s_n(T_n)|}{1+t}>\v/2\right)\\
&=&
 {\bf P}\left(\sup_{u\ge 0}\frac{|s_n(u)|}{1+T_n+u}>\v/2\right)+
  {\bf P}\left(\sup_{t\ge T_n}\frac{|s_n(T_n)|}{1+T_n}>\v/2\right)\\
  &\le&
  {\bf P}\left(\sup_{k\ge 1}\frac{|s_n(\frac{k}{n})|}{T+\frac{k}{n}}>\v/2\right)+
  {\bf P}\left(\frac{|s_n(T_n)|}{T_n}>\v/2\right)=:P_1(n)+P_2(n).
\end{eqnarray*}
To bound $P_1(n)$ use the exponential Chebyshev's (Chernoff's) inequality
(see e.g. \cite{BB} or \cite{BBM}):
\begin{eqnarray*}
  P_1(n) &\le& \sum_{k\ge 1}{\bf P}\left(\frac{|S_k|}{x(T+\frac{k}{n})}>\v/2\right)\\
  &\le&
  \sum_{k\ge 1}{\bf P}\left(\frac{S_k}{x(T+\frac{k}{n})}>\v/2\right)+
  \sum_{k\ge 1}{\bf P}\left(\frac{S_k}{x(T+\frac{k}{n})}<-\v/2\right)\\
   &\le&
   \sum_{k\ge 1}e^{-k\Lambda(R)}+ \sum_{k\ge 1}e^{-k\Lambda(-R)},
\end{eqnarray*}
   where $R:=\frac{x}{k}(T+\frac{k}{n})$.
   Since for all $n$  and $T$ large enough
   $$
     R\ge \v/4,~~~kR\ge T\v/4+k\v/4,
   $$
   we have due to (\ref{1.1})  for $\v/4\in (0,1)$
   $$
     k\Lambda(\pm R)\ge k Rh(R)\ge (T\v/4+k\v/4)\d\v/4=T\frac{\d\v^2}{16}+k\frac{\d\v^2}{16}.
        $$
 Therefore
 $$
   P_1(n) \le 2 e^{-T\d_1}\sum_{k\ge 1}e^{-k\d_1}=C_1e^{-T\d_1},
 $$
where $\d_1:=\frac{\d\v^2}{16}$, ~~$C_1:=2\frac{e^{-\d_1}}{1-e^{-s_1}}$.

Similarly we obtain the bound
$$
 P_2(n) \le C_2 e^{-T\d_2}
$$
for some $\d_2>0$,
$C_2<\infty$.
Hence condition ${\bf III}$ holds  and   the proof   is complete.
\end{proof}

\bigskip


\subsection{Moderate Deviation Principle for Random Walks on half-line.}
Let random piece-wise linear  function $s_n=s_n(\cdot)\in \C$
be defined as before by the sums $S_k$  of independent random variables  distributed as $\x$. Let
$\x$ have zero mean ${\bf E}\x=0$  and assume Cramer's condition

$[{\bf C_0}].$ {\it For some $\d>0$}
$$
  {\bf E}e^{\d|\x|}<\infty.
$$
Let a sequence $x=x(n)$, used in the construction of
$s_n$,
satisfy
$$
\frac{x}{\sqrt{n}}\to \infty,~~~\frac{x}{n}\to 0~~~\mbox{if}~~~n\to \infty.
$$
The rate function for   $s_n$  is defined as
$$
  I_0(f):=
  \left\{
           \begin{array}{lcl}
                              \frac{1}{2\sigma^2}\int_0^\infty  (f'(t))^2 dt,~~~
  \mbox{if}~~~f(0)=0,~~~f~~\mbox{is absolutely continuous}\\
  \infty~~~\mbox{otherwise},\\
                              \end{array}
                              \right.
$$
where $\sigma^2:={\bf E}\x^2$.

\begin{theorem} Let ${\bf E}\x=0$ and
 condition $[{\bf C}_0]$ holds.
 Then $s_n$ satisfies LDP
 with speed $\frac{x^2}{n}$ and rate function $I_0$
 in space
 $(\C,~\r_\kappa)$ with $\kappa=0$, i.e. for any measurable set
 $B\subset \C$
 $$
   \lms_{n\to \infty}\frac{n}{x^2}\ln{\bf P}(s_n\in B)\le -I_0([B]),
 $$

 $$
   \lmi_{n\to \infty}\frac{n}{x^2}\ln{\bf P}(s_n\in B)\ge -I_0((B)).
 $$
\end{theorem}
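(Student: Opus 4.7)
The plan is to apply Theorem~\ref{thm} with the normalising speed $n$ replaced throughout by $x^2/n$. Inspection of the proof of Theorem~\ref{thm} shows that the specific form of the speed is nowhere used, so the entire argument carries over verbatim provided conditions \textbf{I}--\textbf{III} are verified for this new speed and rate function $I_0$. I would then follow the scheme already employed for Theorem~\ref{thm3.1}, with adjustments for the moderate regime.

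Condition \textbf{I} follows from the classical moderate deviation principle for $s_n$ on $\C[0,T]$ in the uniform metric, which holds under $[{\bf C}_0]$ together with $\sqrt n/x\to 0$ and $x/n\to 0$. Since $\r_T$ is dominated by the uniform metric on $[0,T]$, the principle transfers to $(\C[0,T],\r_T)$ with the same rate function. For the extension property (\ref{0.3}), given $f\in\C[0,T]$ with $I_0^T(f)<\infty$ I would set $g(t):=f(t)$ for $t\le T$ and $g(t):=f(T)$ for $t>T$; then $g\in\C_0$ and $g'\equiv 0$ on $(T,\infty)$, hence $I_0^U(g^{(U)})=I_0^T(f)$ for every $U\ge T$. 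Condition \textbf{II} is a direct application of the Cauchy--Bunyakovski inequality: for $f\in B_r^+$ the bound $\int_0^\infty(f'(s))^2\,ds\le 2\sigma^2 r$ gives, for every $t\ge T$,
$$
\frac{|f(t)|}{1+t}\le \frac{\sqrt{t\cdot 2\sigma^2 r}}{1+t}\le \sqrt{\frac{2\sigma^2 r}{T}}\longrightarrow 0,
$$
uniformly in $f\in B_r^+$.

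Condition \textbf{III} is the main technical step. Following the decomposition used in the proof of Theorem~\ref{thm3.1}, I would reduce the tail to a sum of Chernoff bounds of the form
$$
{\bf P}\Bigl(\sup_{t\ge T}\frac{|s_n(t)|}{1+t}>\v\Bigr)\le \sum_{k\ge nT}2\exp\bigl(-k\Lambda(\pm\lambda_k/k)\bigr),\qquad \lambda_k:=\tfrac{\v x}{2}\bigl(1+\tfrac{k}{n}\bigr).
$$
The key observation is that $\lambda_k/k=\v x/(2k)+\v x/(2n)\to 0$ uniformly in $k\ge nT$ as $n\to\infty$, because $x/n\to 0$. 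Hence for $n$ large enough the argument of $\Lambda$ lies in the neighbourhood of the origin where $[{\bf C}_0]$ supplies the quadratic lower bound $\Lambda(\a)\ge \a^2/(4\sigma^2)$. This produces
$$
k\Lambda(\pm\lambda_k/k)\ge \frac{\v^2 x^2 (1+k/n)^2}{16\sigma^2 k}\ge \frac{\v^2 x^2 k}{16\sigma^2 n^2},
$$
so the sum dominates by a geometric series with first term of order $\exp(-c\v^2 x^2 T/n)$ and with a polynomial prefactor in $n/x$. Dividing by the speed $x^2/n$ gives
$$
\lms_{n\to\infty}\frac{n}{x^2}\ln{\bf P}(\cdot)\le -c\v^2 T,
$$
which can be made as negative as desired by taking $T$ large.

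The main obstacle I anticipate is the combined use of both scaling assumptions in Condition \textbf{III}: the hypothesis $x/n\to 0$ is needed to keep $\lambda_k/k$ inside the quadratic regime of $\Lambda$ uniformly in $k\ge nT$, while $x/\sqrt n\to\infty$ is required so that the polynomial prefactor arising from the geometric summation is negligible on the logarithmic scale $n/x^2$. Once this double control is in place, the remaining estimates repeat those in Theorem~\ref{thm3.1}.
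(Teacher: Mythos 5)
Your overall plan coincides with the paper's: Theorem \ref{thm} is applied with $n$ replaced by $x^2/n$, condition \textbf{I} is taken from the known moderate deviation principle on $[0,T]$ (the paper cites \cite{M76} or \cite{BMUBU}), the constant extension handles \eqref{0.3}, and your Cauchy--Bunyakovski verification of \textbf{II} is exactly the intended ``obvious'' argument. The problem is in condition \textbf{III}, and it is precisely the obstacle you flag but then dismiss incorrectly. Your union bound runs over the individual nodes $k/n$, $k\ge nT$, and yields a geometric series with ratio $\exp\{-c\v^2x^2/n^2\}$. Since $x/n\to 0$, this ratio tends to $1$, so the prefactor $\bigl(1-\exp\{-c\v^2x^2/n^2\}\bigr)^{-1}$ is of order $n^2/x^2$, and on the logarithmic scale it contributes $\frac{2n}{x^2}\ln\frac{n}{x}$. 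Your claim that $x/\sqrt n\to\infty$ makes this negligible is false: take $x^2=n\sqrt{\ln n}$, which satisfies both $x/\sqrt n\to\infty$ and $x/n\to 0$, yet $\frac{n}{x^2}\ln\frac{n}{x}\sim\frac{1}{2}\sqrt{\ln n}\to\infty$, so your final bound reads $\lms\frac{n}{x^2}\ln{\bf P}(\cdot)\le+\infty-c\v^2T$ and proves nothing. The assumption $x\gg\sqrt n$ does not control the number of summands; it enters elsewhere.

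The paper avoids this by a different decomposition: it blocks time into unit intervals $[K,K+1]$, $K\ge T$ integer, writes ${\bf P}\le\sum_K P_1(K,n)+\sum_K P_2(K,n)$ with $P_2(K,n)={\bf P}(|s_n(K)|\ge K\v/2)$ and $P_1(K,n)$ the supremum of the restarted walk over one block, and bounds $P_1$ by the Kolmogorov-type maximal inequality of Lemma 3.2 (this is where $x/\sqrt n\to\infty$ is actually used, to force the denominator $\min_m{\bf P}(|S_m|/(xK)<\v/4)\to 1$), reducing the block supremum to a single Chernoff bound at the block endpoint. The resulting geometric series in $K$ has ratio $\exp\{-\d_1x^2/n\}\to 0$, so its prefactor is bounded and the bound $\lms\frac{n}{x^2}\ln{\bf P}\le-T\d_1$ survives. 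Some such blocking-plus-maximal-inequality step (or an equivalent device) is indispensable; the node-by-node Chernoff sum cannot be repaired under the stated hypotheses on $x(n)$.
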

Similarly to the proof of Lemma 3.1, the proof of Theorem   3.2 consists in checking   conditions ${\bf I}-{\bf III}$, replacing $n$ by
$\frac{x^2}{n}$. In all other details the proof is the same.

Condition ${\bf I}$ is verified with help of \cite{M76} (Theorem 1) or \cite{BMUBU} (Theorem 2.2).
Condition ${\bf II}$  is obvious. Only condition
${\bf III}$ requires a clarification, which is done by using the following form of Kolmogorov's inequality (\cite{B}, p. 295, lemma 11.2.1):

\begin{lemma} For any $x\ge 0$, $y\ge 0$, $n\ge 1$
$$
{\bf P}(\max_{1\le m\le n}|S_m|\ge x+y)\le \frac{{\bf P}(|S_n|\ge x)}
{\min\limits_{1\le m\le n}{\bf P}(|S_m|\le y)}.
$$
\end{lemma}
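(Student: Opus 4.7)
The plan is to use the classical Ottaviani-type first-passage argument. I would introduce the stopping time $\tau:=\min\{m\in\{1,\dots,n\}:|S_m|\ge x+y\}$, with $\tau:=\infty$ if no such $m$ exists, and decompose the event of interest as the disjoint union
\[
\Bigl\{\max_{1\le m\le n}|S_m|\ge x+y\Bigr\}=\bigsqcup_{m=1}^n \{\tau=m\}.
\]
The key observation is the triangle-inequality implication: on $\{\tau=m\}\cap\{|S_n-S_m|\le y\}$ one has $|S_n|\ge |S_m|-|S_n-S_m|\ge(x+y)-y=x$. Hence
\[
\mathbf{P}(|S_n|\ge x)\ge \sum_{m=1}^n \mathbf{P}\bigl(\tau=m,\ |S_n-S_m|\le y\bigr).
\]

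Next I would exploit independence. Since $\{\tau=m\}\in\sigma(\xi_1,\dots,\xi_m)$ and $S_n-S_m=\xi_{m+1}+\cdots+\xi_n$ is independent of $(\xi_1,\dots,\xi_m)$ with $S_n-S_m\stackrel{d}{=}S_{n-m}$, the joint probability factors as
\[
\mathbf{P}(\tau=m,\ |S_n-S_m|\le y)=\mathbf{P}(\tau=m)\,\mathbf{P}(|S_{n-m}|\le y).
\]
For $m=n$ we have $\mathbf{P}(|S_0|\le y)=1$, and for $m\in\{1,\dots,n-1\}$ the index $n-m$ lies in $\{1,\dots,n-1\}$, so in either case $\mathbf{P}(|S_{n-m}|\le y)\ge c:=\min_{1\le k\le n}\mathbf{P}(|S_k|\le y)$. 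Substituting this lower bound into the previous display and summing gives $\mathbf{P}(|S_n|\ge x)\ge c\cdot\mathbf{P}(\tau\le n)=c\cdot\mathbf{P}(\max_{1\le m\le n}|S_m|\ge x+y)$, which rearranges to the claimed inequality (interpreted as vacuous when $c=0$).

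There is no genuine obstacle here; the argument is essentially a one-liner once the stopping time is introduced. The only mild subtlety to watch is the boundary term $m=n$, which is handled trivially since $\mathbf{P}(|S_0|\le y)=1\ge c$ for any $y\ge 0$, so that it needs no special treatment despite the index $n-m=0$ falling outside the range of the minimum.
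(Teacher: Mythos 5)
Your argument is correct and is the classical Ottaviani/Kolmogorov first-passage proof; the paper itself gives no proof of this lemma, simply citing it from Borovkov's textbook (\cite{B}, Lemma 11.2.1), where essentially this same stopping-time decomposition is used. The one subtlety you flag — the boundary term $m=n$ with $\mathbf{P}(|S_0|\le y)=1$ — is handled correctly, so nothing further is needed.
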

\begin{proof}
An upper bound for
$$
P:=  {\bf P}\left(\sup_{t\ge T}\frac{|s_n(t)|}{1+t}\ge \v\right)
$$
is obtained by using
\begin{eqnarray*}
  P &\le& \sum_{K\ge T}{\bf P}\left(\sup_{K\le t\le K+1}|s_n(t)|\ge K\v\right)\\
  &\le&
  \sum_{K\ge T}{\bf P}(\sup_{K\le t\le K+1}|s_n(t)-s_n(K)|\ge K\v/2)+
  \sum_{K\ge T}{\bf P}(|s_n(K)|\ge K\v/2),
\end{eqnarray*}
so that
\begin{equation}{\label{2.1}}
P\le \sum_{K\ge T}P_1(K,n)+\sum_{K\ge T}P_2(K,n),
\end{equation}
where
$$
P_1(K,n):={\bf P}(\sup_{0\le t\le 1}|s_n(u)|\ge K\v/2),~~~
P_2(K,n):={\bf P}(|s_n(K)|\ge K\v/2).~~~
$$

We bound $P_2(K,n)$ using exponential Chebyshev's inequality:
$$
  P_2(K,n)={\bf P}\left(\frac{|S_{nK}|}{nK}\ge \frac{x\v}{2n}\right)\le
  e^{-nK\Lambda(\frac{x\v}{2n})}+e^{-nK\Lambda(-\frac{x\v}{2n})}.
$$
Since for all $n$  large enough
$\frac{x\v}{2n}\le 1$, then by (\ref{1.1})
$$
  nK\Lambda\left(\pm\frac{x\v}{2n}\right)\ge \frac{x^2}{n}K \d_1,~~~
  \d_1:=\frac{\d\v^2}{4}.
$$
Therefore
\begin{equation}{\label{2.2}}
\sum_{K\ge T}P_2(K,n)\le 2 \frac{e^{-\frac{x^2}{n}T\d_1}}{1-e^{-\frac{x^2}{n}\d_1}}.
\end{equation}

Bound now $P_1(K,n)$ using Kolmogorov's inequality (Lemma 3.2) and exponential Chebyshev's inequality (Chernoff)
$$
 P_1(K,n)={\bf P}\left(\max_{1\le m\le n}\frac{|S_m|}{xK}\ge \v/4+\v/4\right)\le
 \frac{1}{c}{\bf P}\left(\frac{|S_n|}{xK}\ge \v/4\right),
$$
where
$$
  c:=\min_{1\le m\le n}{\bf P}\left(\frac{|S_m|}{xK}<\v/4\right).
$$
Since
$$
c=\min_{1\le m\le n}{\bf P}\left(\frac{|S_m|}{xK}< \v/4\right)\ge
\min_{1\le m\le n}{\bf P}\left(\frac{|S_m|}{\sqrt{m}}< \frac{xT}{\sqrt{n}}\v/4\right)\to 1
$$
as $n\to \infty$,
for $n$ large enough
$$
 P_1(K,n)\le 2{\bf P}\left(\frac{|S_n|}{xK}\ge \v/4\right)\le
 2e^{-n\Lambda(\frac{xK}{n}\v/4)}+2e^{-n\Lambda(-\frac{xK}{n}\v/4)}.
$$
By (\ref{1.1})  for large enough $n$  and some $\d_1>0$
we have
$$
  n\Lambda\left(\pm\frac{xK}{n}\v/4\right)\ge \frac{x^2}{n}K\d_1,
$$
therefore
$$
   P_1(K,n)\le 4 e^{-\frac{x^2}{n}K\d_1},
$$
\begin{equation}{\label{2.3}}
\sum_{K\ge T}P_1(K,n)\le 4 \frac{e^{-\frac{x^2}{n}T\d_1}}{1-e^{-\frac{x^2}{n}\d_1}}.
\end{equation}
Applying (\ref{2.2}), (\ref{2.3})  to
(\ref{2.1}), we have for $T\ge \frac{N}{\d_1}$
the required inequality in ${\bf III}$:
$$
  \lms_{n\to \infty}\frac{n}{x^2}\ln{\bf P}\le -N.
$$
Thus condition  ${\bf III}$ is proved.
\end{proof}

\section{Large Deviations for Diffusion Processes on half-line}

\subsection{Zero drift.}
Consider a stochastic process $X_n(t)$, $t\geq 0$, defined on the stochastic basis
$(\Omega,\mathfrak{F},\mathfrak{F}_t,\bf P)$ that is an It\^o integral with respect to Wiener process $w(t)$.
$$
X_n(t)=x_0+\frac{1}{\sqrt{n}}\int_0^t \sigma_n(\omega,s)dw(s),
$$
where  $\sigma_n(\omega,t)$  is $\mathfrak{F}_t$-adapted and such that the It\^o integral is defined.
%

\begin{lemma}
 Let for some  $\lambda>0$ and all $t\geq 0, \ n\geq 0$
\begin{equation}{\label{4.1}}
\sigma_n^2(\omega,t)\leq \lambda \  \text{a.s.}
\end{equation}
Then for any $N<\infty$ and $\varepsilon>0$ there exists $T=T_{N,\varepsilon}<\infty$
such that
\begin{equation}{\label{4.2}}
\lms_{n\to \infty}\frac{1}{n}\ln \mathbf{P}\biggl(
\sup\limits_{t\geq T}\frac{|X_n(t)|}{1+t}>\varepsilon\biggl)\leq -N.
\end{equation}
\end{lemma}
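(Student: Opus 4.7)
The plan is to discretize the half-line into unit intervals $[K,K+1]$ with $K$ ranging over integers $\geq T$, bound the probability of the supremum on each block by an exponential martingale inequality, and then sum a geometric tail.

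First I would reduce to the martingale. Set $M_n(t) := \sqrt{n}(X_n(t) - x_0) = \int_0^t \sigma_n(\omega,s)\,dw(s)$. By \eqref{4.1} the quadratic variation satisfies $\langle M_n\rangle_t \leq \lambda t$ almost surely. Using that for $t\in [K,K+1]$ we have $1+t\geq K$, and that for $K$ large enough $K\varepsilon/2 > |x_0|$, the event $\{|X_n(t)|>(1+t)\varepsilon\}$ is contained in $\{|X_n(t)-x_0|>K\varepsilon/2\}$, and so
\begin{equation*}
\mathbf{P}\left(\sup_{t\geq T}\frac{|X_n(t)|}{1+t}>\varepsilon\right) \leq \sum_{K\geq T}\mathbf{P}\left(\sup_{0\leq s\leq K+1}|M_n(s)|>\sqrt{n}\,K\varepsilon/2\right).
\end{equation*}

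Second, I would apply the standard exponential inequality for continuous martingales with bounded quadratic variation. Since $\exp(\theta M_n(s)-\tfrac{\theta^2}{2}\langle M_n\rangle_s)$ is a positive supermartingale and $\langle M_n\rangle_{K+1}\leq \lambda(K+1)$, Doob's maximal inequality combined with optimization over $\theta>0$ gives
\begin{equation*}
\mathbf{P}\left(\sup_{0\leq s\leq K+1}|M_n(s)|\geq a\right)\leq 2\exp\left(-\frac{a^2}{2\lambda(K+1)}\right).
\end{equation*}
Plugging in $a=\sqrt{n}\,K\varepsilon/2$ and using $K^2/(K+1)\geq K/2$ for $K\geq 1$, each summand is bounded by $2\exp(-nK\varepsilon^2/(16\lambda))$.

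Finally, I would sum the geometric series: for $n$ large,
\begin{equation*}
\mathbf{P}\left(\sup_{t\geq T}\frac{|X_n(t)|}{1+t}>\varepsilon\right) \leq \frac{2 e^{-nT\varepsilon^2/(16\lambda)}}{1-e^{-n\varepsilon^2/(16\lambda)}}\leq 4\exp\left(-\frac{nT\varepsilon^2}{16\lambda}\right),
\end{equation*}
which yields $\limsup_{n\to\infty}\frac{1}{n}\ln\mathbf{P}(\cdot)\leq -\frac{T\varepsilon^2}{16\lambda}$. Choosing $T=T_{N,\varepsilon}\geq 16\lambda N/\varepsilon^2$ (and at least $2|x_0|/\varepsilon$) delivers \eqref{4.2}.

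The main obstacle is purely bookkeeping rather than conceptual: verifying that the Gaussian-type tail for $M_n$ over $[0,K+1]$ is tight enough in $K$ to produce a summable, uniformly exponentially decaying bound. The quadratic-over-linear ratio $K^2/(K+1)$ is exactly what makes this work --- if the denominator $1+t^{1+\kappa}$ were merely $1+t^{1/2}$, the same scheme would fail, and this is in fact the underlying reason the hypothesis $\kappa\geq 0$ (equivalently, denominator growing at least linearly) is natural in the zero-drift Brownian setting.
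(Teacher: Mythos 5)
Your proof is correct and follows essentially the same route as the paper's: reduce to the stochastic integral, split the half-line into blocks, apply the exponential supermartingale plus Doob's maximal inequality with an optimized parameter, and sum the resulting geometric series (you even land on the same constant $e^{-nT\varepsilon^2/(16\lambda)}$). The only cosmetic difference is that you use unit-length blocks $[K,K+1]$ where the paper uses blocks $[Tr,T(r+1))$ of length $T$.
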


\begin{proof}
For $T>1\vee(\frac{2|x_0|}{\varepsilon}-1)$ we have
\begin{eqnarray*}\label{4.3}
\mathbf{P}\biggl(\sup_{t\geq T}\frac{|X_n(t)|}{1+t}>\varepsilon\biggl)&\leq&
\mathbf{P}\biggl(\sup_{t\geq T}\frac{1}{1+t}\biggl|\frac{1}{\sqrt{n}}\int_0^t \sigma_n(\omega,s)dw(s)\biggl|>\frac{\varepsilon}{2}\biggl)\nonumber\\
&\leq&
 \mathbf{P}\biggl(\sup_{t\geq T}\frac{1}{t}\biggl|\int_0^t \sigma_n(\omega,s)dw(s)\biggl|>\frac{\sqrt{n}\varepsilon}{2}\biggl)\nonumber\\
 &=&
  \mathbf{P}\biggl(\bigcup_{r=1}^\infty\biggl\{\sup_{t\in [Tr,T(r+1))}\frac{1}{t}\biggl|
\int_0^t \sigma_n(\omega,s)dw(s)\biggl|>\frac{\sqrt{n}\varepsilon}{2}\biggl\}\biggl)\nonumber\\
&\leq&
\sum_{r=1}^\infty \mathbf{P}\biggl(\sup_{t\in [0,T(r+1)]}\biggl|
\int_0^t \sigma_n(\omega,s)dw(s)\biggl|>\frac{Tr\sqrt{n}\varepsilon}{2}\biggl)=:\sum_{r=1}^\infty P_r.
\end{eqnarray*}
We bound $P_r$ from above as follows. For any $c>0$ we have
\begin{eqnarray*}\label{4.4}
P_r&=& \mathbf{P}\biggl(\sup_{t\in [0,T(r+1)]}\exp\biggl\{\biggl|
c\int_0^t \sigma_n(\omega,s)dw(s)\biggl|\biggl\}>\exp\biggl\{\frac{cTr\sqrt{n}\varepsilon}{2}\biggl\}\biggl)\nonumber\\
&\leq&
  \mathbf{P}\biggl(\sup_{t\in [0,T(r+1)]}\exp\biggl\{
c\int_0^t \sigma_n(\omega,s)dw(s)\biggl\}>\exp\biggl\{\frac{cTr\sqrt{n}\varepsilon}{2}\biggl\}\biggl)\nonumber\\
&&+
  \mathbf{P}\biggl(\sup_{t\in [0,T(r+1)]}\exp\biggl\{-
c\int_0^t \sigma_n(\omega,s)dw(s)\biggl\}>\exp\biggl\{\frac{cTr\sqrt{n}\varepsilon}{2}\biggl\}\biggl)=:P_{r,1}+P_{r,2}.
\end{eqnarray*}
We proceed to bound $\mathbf{P}_{r,1}.$ For ease of notation we drop arguments in $\sigma_n(\omega,t)$.

Using (\ref{4.1}) we have
\begin{eqnarray*}
P_{r,1}& = &\mathbf{P}\biggl(\sup_{t\in [0,T(r+1)]}\exp\biggl\{
c\int_0^t \sigma_n dw(s)\pm \frac{c^2}{2}\int_0^t \sigma_n^2 ds\biggl\}>
\exp\biggl\{\frac{cTr\sqrt{n}\varepsilon}{2}\biggl\}\biggl)\\
&\leq &\mathbf{P}\biggl(\sup_{t\in [0,T(r+1)]}\exp\biggl\{
c\int_0^t \sigma_n dw(s)- \frac{c^2}{2}\int_0^t \sigma_n^2 ds\biggl\}>
\exp\biggl\{\frac{cTr\sqrt{n}\varepsilon-c^2\lambda T(r+1)}{2}\biggl\}\biggl).
\end{eqnarray*}
By Doob's martingale inequality  for
$$
M(t)=\exp\biggl\{
c\int_0^t \sigma_n(\omega,s) dw(s)- \frac{c^2}{2}\int_0^t \sigma_n^2(\omega,s) ds\biggl\},
$$
we have
$$
P_{r,1}\leq \frac{\mathbf{E}M(t)}{\exp\biggl\{\frac{cTr\sqrt{n}\varepsilon-c^2\lambda T(r+1)}{2}\biggl\}}=
\exp\biggl\{-\frac{cTr\sqrt{n}\varepsilon-c^2\lambda T(r+1)}{2}\biggl\}.
$$
Taking $c=\frac{\sqrt{n}\varepsilon T r }{2\lambda T(r+1)}$ we obtain
\begin{equation}{\label{4.5}}
P_{r,1}\leq \exp\biggl\{-\frac{(Tr)^2n\varepsilon^2}{8\lambda T(r+1)}\biggl\}\leq
\exp\biggl\{-\frac{T r n\varepsilon^2}{16\lambda}\biggl\}.
\end{equation}
  $P_{r,2}$ is bounded in exactly the same way. Collecting the terms it now follows
\begin{eqnarray}\label{4.7}
\mathbf{P}\biggl(\sup_{t\geq T}\frac{|X_n(t)|}{1+t}>\varepsilon\biggl)&\leq & 2 \sum_{r=1}^\infty
\exp\biggl\{-\frac{T r n\varepsilon^2}{16\lambda}\biggl\}\nonumber\\
&=& \frac{2\exp\biggl\{-\frac{Tn\varepsilon^2}{16\lambda}\biggl\}}{1-\exp\biggl\{-\frac{Tn\varepsilon^2}{16\lambda}\biggl\}}\leq
4\exp\biggl\{-\frac{Tn\varepsilon^2}{16\lambda}\biggl\}.
\end{eqnarray}
Using inequality (\ref{4.7}) we have
$$
\lms_{n\to \infty}\frac{1}{n}\ln \mathbf{P}\biggl(\sup_{t\geq T}\frac{|X_n(t)|}{1+t}>\varepsilon\biggl)\leq
-\frac{T\varepsilon^2}{16\lambda}.
$$
It follows now that for $T>1\vee(\frac{2|x_0|}{\varepsilon}-1)\vee\frac{16\lambda N}{\varepsilon^2}$
$$
\lms_{n\to \infty}\frac{1}{n}\ln \mathbf{P}\biggl(\sup_{t\geq T}\frac{|X_n(t)|}{1+t}>\varepsilon\biggl)\leq -N,
$$
which proves (\ref{4.2}).
\end{proof}

Let now $X_n$ solve a stochastic differential equation (SDE)
\begin{equation}{\label{4.8}}
X_n(t)=x_0+\frac{1}{\sqrt{n}}\int_0^t \sigma(X_n(s))dw(s)
\end{equation}
on half-line $[0,\infty)$.

\begin{theorem} Let $\sigma(x)$ be a measurable function of real argument $x$, such that for some   $\lambda\geq 1$ and all $\ x\in R$
\begin{equation}{\label{4.9}}
\frac{1}{\lambda}\leq\sigma^2(x)\leq \lambda.
\end{equation}
Let the Lebesgue measure of discontinuities of $\sigma$ be zero.
Then $\{X_n\}$ satisfies LDP in space $(\mathbb{C},\rho_\kappa)$
with $\kappa = 0$ and good rate function:
$$ I(f)=\left\{
           \begin{array}{lcl}
                              \frac{1}{2}\int_0^\infty \frac{(f'(t))^2}{\sigma^2(f(t))}dt, \text{ if}\ f(0)=x_0, \
                              f  \text{ is absolutely continuous}, \\
                              \infty, \ \text{otherwise.}\\
                              \end{array}
                              \right.$$
\end{theorem}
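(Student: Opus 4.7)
The plan is to verify the three conditions {\bf I}, {\bf II}, {\bf III} of Theorem \ref{thm} (with $\kappa = 0$) for the sequence $\{X_n\}$ of solutions to (\ref{4.8}). A preliminary reduction puts us in $\C_0$: let $Y_n(t) := X_n(t) - x_0$, which satisfies an SDE of the same form with coefficient $\widetilde\sigma(y) := \sigma(y + x_0)$, obeying the same hypotheses. Since the shift $f \mapsto f + x_0$ is a $\r$-isometry and the rate functions are related by $\widetilde I(f) = I(f + x_0)$, it suffices to prove the LDP for $Y_n$ in $(\C_0, \r)$; so we assume $x_0 = 0$.

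Condition {\bf I} asks for the finite-horizon LDP in $(\C[0,T], \r_T)$. Since $[0,T]$ is compact, $\r_T$ is equivalent to the uniform metric, so what is required is the standard small-noise LDP for the SDE $Y_n$ on $[0,T]$ with rate $I_0^T(f) = \tfrac12\int_0^T (f'(t))^2/\sigma^2(f(t))\,dt$; under (\ref{4.9}) and the hypothesis that the discontinuities of $\sigma$ form a Lebesgue-null set, such a Freidlin--Wentzell LDP is available from the literature on LDP for SDE with bounded, elliptic, measurable diffusion coefficients. The extension property (\ref{0.3}) is trivial: given $f \in \C[0,T]$ with $I_0^T(f) < \infty$, define $g(t) := f(t)$ for $t \leq T$ and $g(t) := f(T)$ for $t > T$; then $g \in \C_0$, $g' \equiv 0$ on $(T,\infty)$, and $I_0^U(g^{(U)}) = I_0^T(f)$ for every $U \geq T$.

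Condition {\bf II} follows from the upper bound in (\ref{4.9}): since $\sigma^2 \leq \l$,
$$I_0^T(f^{(T)}) \geq \frac{1}{2\l}\int_0^T (f'(t))^2\, dt,$$
so every $f \in B_r^+$ satisfies $\int_0^\infty (f'(t))^2 dt \leq 2r\l$. Cauchy--Schwarz then yields $|f(t)| \leq \sqrt{2r\l\,t}$, whence $\sup_{f \in B_r^+} \sup_{t \geq T} |f(t)|/(1+t) \leq \sqrt{2r\l/T} \to 0$ as $T \to \infty$. Condition {\bf III} is immediate from the preceding lemma applied with $\sigma_n(\omega,s) := \sigma(X_n(s))$, since $\sigma_n^2 \leq \l$ by (\ref{4.9}); the bound (\ref{4.2}) is exactly what {\bf III} demands. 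Theorem \ref{thm} then delivers the claimed LDP.

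The genuinely nontrivial step is Condition {\bf I} under the weak regularity hypothesis on $\sigma$: classical Freidlin--Wentzell assumes Lipschitz coefficients, so if a suitable reference is unavailable one must supply the LDP by hand. A natural route is a random time-change $\tau_n(t) := \int_0^t \sigma^2(X_n(s))\,ds$, bi-Lipschitz with slopes in $[1/\l,\l]$ by (\ref{4.9}), which turns $\sqrt{n}\,X_n(\tau_n^{-1}(\cdot))$ into a Wiener process; combined with Schilder's theorem and an approximation/contraction argument exploiting the a.e.-continuity of $\sigma$, this delivers the finite-horizon LDP with the stated rate function.
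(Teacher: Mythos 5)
Your proposal follows essentially the same route as the paper: Condition \textbf{I} is obtained from the finite-horizon LDP for SDEs with measurable, uniformly elliptic, a.e.-continuous diffusion coefficient (the paper cites Theorem 1 of Kulik--Soboleva) together with the constant extension $g(t)=f(T)$ for $t\ge T$; Condition \textbf{II} via $\sigma^2\le\l$ and Cauchy--Bunyakovskii exactly as in the paper; and Condition \textbf{III} by applying Lemma 4.1 with $\sigma_n(\omega,s)=\sigma(X_n(s))$. Your explicit shift to $x_0=0$ and the time-change sketch for \textbf{I} are sensible additions but do not change the argument.
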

\begin{proof}
Existence of weak solution in (\ref{4.8}) follows e.g. from Proposition 1 of \cite{Kul}.
Condition \textbf{I} holds by Theorem  1 in \cite{Kul} and by extending $f$ for $t\geq T$
by its value at  $T$.\\
Consider the rate function
$$ I_0^T(f^{(T)})=\left\{
           \begin{array}{lcl}
                              \frac{1}{2}\int_0^T \frac{(f'(t))^2}{\sigma^2(f(t))}dt, \text{ if}\ f(0)=x_0, \
                              f  \text{ is absolutely continuous}, \\
                              \infty, \ \text{otherwise.}\\
                              \end{array}
                              \right.$$

We verify condition \textbf{II}. Using (\ref{4.9}) and applying Cauchy-Bunyakovskii inequality, we have (recall that $B_r^+:=\{f\in \mathbb{C}: \ \lms_{T \to \infty}I_0^T(f^{(T)})\leq r\}$)
\begin{eqnarray*}
\lim\limits_{T\rightarrow \infty}\sup\limits_{f\in B_r^+}\sup\limits_{t\geq T}
\frac{|f(t)|}{1+t}&=&\lim\limits_{T\rightarrow \infty}\sup\limits_{f\in B_r^+}\sup\limits_{t\geq T}
\frac{1}{1+t}\biggl|\int_0^tf'(s)ds\biggl|
 \leq
  \lim\limits_{T\rightarrow \infty}\sup\limits_{f\in B_r^+}\sup\limits_{t\geq T}
\frac{t^{1/2}}{1+t}\biggl(\int_0^t(f'(s))^2ds\biggl)^{1/2}\\
&\leq& \lim\limits_{T\rightarrow \infty}\sup\limits_{f\in B_r^+}
\frac{1}{T^{1/2}}\biggl(\lambda\int_0^T\frac{(f'(s))^2}{\sigma^2(f(s))}ds\biggl)^{1/2}\\
&=&
\lim\limits_{T\rightarrow \infty}\sup\limits_{f\in B_r^+}
\frac{\sqrt{2\lambda}}{T^{1/2}}\sqrt{I_0^T(f^{(T)})}\leq
\lim\limits_{T\rightarrow \infty}
\frac{\sqrt{2\lambda r}}{T^{1/2}}=0.
\end{eqnarray*}

Condition \textbf{III} follows from
(\ref{4.9}) and  Lemma 4.1.
\end{proof}

\subsection{Non-zero drift.}
Consider solution of SDE on half-line $[0,\infty)$
\begin{equation}{\label{4.10}}
X_n(t)=x_0+\int_0^t a(X_n(s))ds+\frac{1}{\sqrt{n}}\int_0^t \sigma(X_n(s))dw(s).
\end{equation}
\begin{lemma}Suppose there exists $\lambda>0$  such that for all  $y\in R$
\begin{equation}{\label{4.11}}
|a(y)|+\sigma^2(y)\leq \lambda.
\end{equation}
Then for any $\kappa > 0$, $N<\infty$ and $\varepsilon>0$ there exists $T=T_{N,\varepsilon,\kappa}<\infty$
such that
\begin{equation}{\label{4.12}}
\lms_{n\to \infty}\frac{1}{n}\ln \mathbf{P}\biggl(
\sup\limits_{t\geq T}\frac{|X_n(t)|}{1+t^{1+\kappa}}>\varepsilon\biggl)\leq -N.
\end{equation}
\end{lemma}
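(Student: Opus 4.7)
The plan is to split $X_n(t)-x_0$ into the bounded-variation part $A_n(t):=\int_0^t a(X_n(s))\,ds$ and the stochastic integral $M_n(t):=\frac{1}{\sqrt n}\int_0^t\sigma(X_n(s))\,dw(s)$, and to handle each one separately. The drift is dealt with deterministically and the martingale by the same block-decomposition/exponential-Chebyshev argument as in Lemma 4.1, up to replacing the denominator $1+t$ by $1+t^{1+\kappa}$.

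\emph{Drift part.} From $|a|\le \l$ we have the pathwise bound $|A_n(t)|\le \l t$, hence
$$
\sup_{t\ge T}\frac{|x_0|+|A_n(t)|}{1+t^{1+\kappa}}\le \sup_{t\ge T}\frac{|x_0|+\l t}{1+t^{1+\kappa}}\longrightarrow 0\quad (T\to\infty),
$$
and this is the one and only place where the hypothesis $\kappa>0$ is essential (it is what allows the linearly growing drift to be absorbed). Fix $T_1<\infty$ so that this quantity is strictly less than $\v/2$ for every $\omega$ once $T\ge T_1$.

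\emph{Martingale part.} For $T\ge T_1$ the triangle inequality gives
$$
\mathbf P\biggl(\sup_{t\ge T}\frac{|X_n(t)|}{1+t^{1+\kappa}}>\v\biggl)\le
\mathbf P\biggl(\sup_{t\ge T}\frac{|M_n(t)|}{1+t^{1+\kappa}}>\v/2\biggl).
$$
Decompose $[T,\infty)=\bigcup_{r\ge 1}[Tr,T(r+1))$, use the union bound, and bound $\frac{1}{1+t^{1+\kappa}}$ from above by $(Tr)^{-(1+\kappa)}$ on the $r$-th block, to reduce the problem to estimating
$$
P_r:=\mathbf P\biggl(\sup_{t\in[0,T(r+1)]}|M_n(t)|>\tfrac{(Tr)^{1+\kappa}\v}{2}\biggl).
$$
Apply Doob's maximal inequality to the exponential martingale $\mathcal E_c(t):=\exp\{c\sqrt n M_n(t)-\tfrac{c^2}{2}\int_0^t\sigma^2(X_n(s))\,ds\}$, using $\sigma^2\le\l$ as in the proof of Lemma 4.1; for both signs of $M_n$ this gives
$$
P_r\le 2\exp\biggl\{-\frac{c\sqrt n(Tr)^{1+\kappa}\v-c^2\l T(r+1)}{2}\biggl\}.
$$
Optimising over $c$ (take $c=\frac{\sqrt n(Tr)^{1+\kappa}\v}{2\l T(r+1)}$) produces
$$
P_r\le 2\exp\biggl\{-\frac{n(Tr)^{2(1+\kappa)}\v^2}{8\l T(r+1)}\biggl\}.
$$

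\emph{Summation and conclusion.} Since $\frac{r^{2(1+\kappa)}}{r+1}\ge \frac{r}{2}$ for all $r\ge 1$ and $T^{2(1+\kappa)-1}\ge T$ for $T\ge 1$, there is a constant $c_0=c_0(\l,\v,\kappa)>0$ with $P_r\le 2e^{-c_0 nTr}$ for every $r\ge 1$ and every $T\ge 1$. Summing the geometric series,
$$
\sum_{r\ge 1}P_r\le \frac{2e^{-c_0 nT}}{1-e^{-c_0 nT}}\le 4e^{-c_0 nT}
$$
for $n$ large, so that $\lms_{n}\frac{1}{n}\ln\mathbf P(\sup_{t\ge T}|X_n(t)|/(1+t^{1+\kappa})>\v)\le -c_0 T$. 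Choosing $T\ge T_1\vee(N/c_0)$ yields (\ref{4.12}). The only genuinely delicate point is the bookkeeping of the optimised exponent $r^{2(1+\kappa)}/(r+1)$, which must be checked to be at least linear in $r$ so that the geometric series survives; everything else is a direct transcription of the argument of Lemma 4.1.
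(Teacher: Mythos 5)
Your proposal is correct and follows essentially the same route as the paper: absorb $|x_0|+\l t$ deterministically into $\v/2$ using $\kappa>0$, then control the stochastic integral by the block decomposition, exponential martingale and Doob's inequality of Lemma 4.1. The only cosmetic difference is that the paper, instead of re-running that computation with the exponent $1+\kappa$, simply notes that $1/t^{1+\kappa}\le 1/t$ for $t\ge 1$ and reduces directly to the statement of Lemma 4.1.
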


\begin{proof}
Using condition (\ref{4.11}) for $T>1\vee(\frac{4|x_0|}{\varepsilon}-1)^\frac{1}{1+\kappa}\vee (\frac{4\lambda}{\varepsilon})^\frac{1}{k}$ we obtain
\begin{eqnarray*}
&&\mathbf{P}\biggl(\sup_{t\geq T}\frac{|X_n(t)|}{1+t^{1+\kappa}}>\varepsilon\biggl) \\
&\leq&
 \mathbf{P}\biggl(\sup_{t\geq T}\frac{1}{1+t^{1+\kappa}}\biggl(|x_0|+\biggl|\int_0^t a(X_n(s))ds\biggl|+\biggl|\frac{1}{\sqrt{n}}\int_0^t \sigma(X_n(s))dw(s)\biggl|\biggl)>\varepsilon\biggl)\\
 &\leq&
 \mathbf{P}\biggl(\sup_{t\geq T}\frac{1}{1+t^{1+\kappa}}\biggl(|x_0|+\lambda t+\biggl|\frac{1}{\sqrt{n}}\int_0^t \sigma(X_n(s))dw(s)\biggl|\biggl)>\varepsilon\biggl)\\
&\leq&
 \mathbf{P}\biggl(\sup_{t\geq T}\frac{1}{t^{1+\kappa}}\biggl|\int_0^t \sigma(X_n(s))dw(s)\biggl|>\frac{\sqrt{n}\varepsilon}{2}\biggl)
\leq \mathbf{P}\biggl(\sup_{t\geq T}\frac{1}{t}\biggl|\int_0^t \sigma(X_n(s))dw(s)\biggl|>\frac{\sqrt{n}\varepsilon}{2}\biggl).
\end{eqnarray*}
The rest of the proof repeats   that of   Lemma 4.1.
\end{proof}

\begin{theorem} Let $a(y)$ and $\sigma(y)$ are functions of real argument such that
for some $\lambda\geq 1$ and all $\ y,x \in R$
\begin{equation}{\label{4.13}}
|a(y)-a(x)|+|\sigma(y)-\sigma(x)|\leq \lambda|y-x|
\end{equation}
\begin{equation}{\label{4.14}}
\frac{1}{\lambda}\leq\sigma^2(y)\leq \lambda, \ |a(y)|\leq \lambda.
\end{equation}
Then for any given $\kappa > 0$ the sequence $X_n$ satisfies LDP in $(\mathbb{C},\rho_\kappa)$
 with good rate function:
$$ I(f)=\left\{
           \begin{array}{lcl}
                              \frac{1}{2}\int_0^\infty \frac{(f'(t)-a(f(t)))^2}{\sigma^2(f(t))}dt, \text{ if}\ f(0)=x_0, \
                              f  \text{ is absolutely continuous}, \\
                              \infty, \ \text{otherwise.}\\
                              \end{array}
                              \right.$$
\end{theorem}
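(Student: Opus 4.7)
The plan is to verify the three conditions \textbf{I}, \textbf{II} and \textbf{III} of Theorem~\ref{thm}, following the same pattern as the proof of Theorem 4.1. Existence and uniqueness of a global strong solution of the SDE (\ref{4.10}) is standard under the Lipschitz assumption (\ref{4.13}).

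For \textbf{I}, I would invoke the classical Freidlin--Wentzell small-noise LDP on each bounded interval $[0,T]$, which applies because (\ref{4.13})--(\ref{4.14}) give Lipschitz, uniformly bounded and uniformly elliptic coefficients; on $\C[0,T]$ the rate function is the natural truncation of $I$ to $[0,T]$, with value $+\infty$ off the absolutely continuous paths starting at $x_0$. The rate-preserving extension required by (\ref{0.3}) is produced by continuing $f\in\C[0,T]$ beyond $T$ as the unique solution $g$ of the ODE $g'(t)=a(g(t))$ with $g(T)=f(T)$; the integrand of the rate function then vanishes on $(T,\infty)$, so $I_0^U(g^{(U)})=I_0^T(f)$ for every $U\ge T$, and since $|a|\le\lambda$, $g$ grows at most linearly, hence $g\in\C_0$ provided $\kappa>0$.

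For \textbf{II}, I would start from
\begin{equation*}
|f(t)|\le |x_0|+\int_0^t|a(f(s))|\,ds+\int_0^t|f'(s)-a(f(s))|\,ds,
\end{equation*}
bound the first integral by $\lambda t$ using (\ref{4.14}), and apply Cauchy--Bunyakovskii together with (\ref{4.14}) to the second:
\begin{equation*}
\int_0^t|f'(s)-a(f(s))|\,ds \le \sqrt{t}\,\Big(\lambda \int_0^t\frac{(f'(s)-a(f(s)))^2}{\sigma^2(f(s))}\,ds\Big)^{1/2} \le \sqrt{2\lambda r\,t},
\end{equation*}
uniformly over $f\in B_r^+$ (the monotonicity in $T$ of $I_0^T(f^{(T)})$ established inside the proof of Lemma~\ref{Lemma2.1} yields $I_0^t(f^{(t)})\le r$ for every $t$). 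Dividing by $1+t^{1+\kappa}$ and taking $\sup_{t\ge T}$ drives the bound to $0$ as $T\to\infty$; this is precisely where the assumption $\kappa>0$ is indispensable, since the linear term $\lambda t$ coming from the drift is of the same order as the zero-drift normaliser $1+t$ used in Theorem 4.1.

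Condition \textbf{III} is Lemma 4.2 verbatim: its hypothesis (\ref{4.11}) is a direct consequence of (\ref{4.14}). The main technical point is therefore condition \textbf{II}, where one has to control the drift and the martingale-like contribution simultaneously and uniformly in $f\in B_r^+$, relying on Cauchy--Bunyakovskii together with the $\kappa>0$ cancellation. Once the three conditions are checked, Theorem~\ref{thm} delivers the LDP in $(\C,\r_\kappa)$ with the stated rate function.
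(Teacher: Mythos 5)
Your proposal is correct and follows essentially the same route as the paper: condition \textbf{I} via the Freidlin--Wentzell LDP on $[0,T]$ together with the rate-preserving extension $g'=a(g)$, $g(T)=f(T)$; condition \textbf{II} via Cauchy--Bunyakovskii; and condition \textbf{III} via the preceding tail lemma, whose hypothesis follows from (\ref{4.14}). The paper only says condition \textbf{II} is ``verified similarly to Theorem 4.1''; your explicit bound $|f(t)|\le|x_0|+\lambda t+\sqrt{2\lambda r t}$ and the observation that the linear drift term is exactly why $\kappa>0$ is required are the correct way to fill in that step.
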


\begin{proof}
Existence of a strong solution in (\ref{4.10}) is assured by Theorem 1 in \cite{Gih}.

Condition \textbf{I} follows from \cite{Fre} and that it is possible to extend $f$ for $t\geq T$ by solution of differential equation:
$$
g'(t)=a(g(t)), \ \ \ g(T)=f(T), \ t\geq T.
$$
Condition \textbf{II} is verified similarly to that in Theorem 4.1.
Condition \textbf{III} follows
from (\ref{4.13}), (\ref{4.14}) and Lemma 4.2.
\end{proof}

\section{Large Deviations for CEV model on half-line}

Consider $X_n(t)$, $t\geq 0$, that solves the following SDE (also known as the Constant Elasticity of Variance model, CEV).
$$
X_n(t)=1+\int_0^t\mu X_n(s)ds+\frac{1}{n^{1-\gamma}}\int_0^t \sigma (X_n(s))^\gamma dw(s),
$$
where
$\mu$ and $\sigma$ are arbitrary constants, $\gamma\in [1/2,1)$, $n>0$.
Existence and uniqueness of strong solution is given e.g. in \cite{Kle} and \cite{Delbaen}.

\begin{lemma} For any $N<\infty$ and $\varepsilon>0$ there exists $T=T_{N,\varepsilon}<\infty$
such that
\begin{equation}{\label{5.1}}
\lms_{n\to \infty}\frac{1}{n^{2(1-\gamma)}}\ln \mathbf{P}\biggl(
\sup\limits_{t\geq T}\frac{X_n(t)}{e^{\mu t } (1+t)^{\frac{1}{1-\gamma}}}>\varepsilon\biggl)\leq -N.
\end{equation}
\end{lemma}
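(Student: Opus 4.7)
The strategy is to reduce the statement to a Gaussian martingale tail estimate of exactly the flavour already used in Lemmas~4.1 and~4.2, after first transforming the CEV equation into one whose diffusion coefficient is constant. Set $\alpha:=1/(1-\gamma)$ and $\nu:=(1-\gamma)\mu$, and introduce $Z_n(t):=X_n(t)^{1-\gamma}$. Applying It\^o's formula and using the identity $X_n^{-\gamma}\cdot X_n^{\gamma}\equiv 1$, the diffusion part collapses to a scaled Brownian motion and one obtains
$$
 dZ_n(t)=\nu\,Z_n(t)\,dt-\frac{\gamma(1-\gamma)\sigma^2}{2n^{2(1-\gamma)}}Z_n(t)^{-1}\,dt+\frac{(1-\gamma)\sigma}{n^{1-\gamma}}\,dw(t),\qquad Z_n(0)=1.
$$
Since the nonlinear drift is non-positive, multiplying by the integrating factor $e^{-\nu t}$ and integrating gives the pathwise bound
$$
 e^{-\nu t}Z_n(t)\le 1+M_n(t),\qquad M_n(t):=\frac{(1-\gamma)\sigma}{n^{1-\gamma}}\int_0^t e^{-\nu s}\,dw(s),
$$
whence $X_n(t)\le e^{\mu t}(1+M_n(t))_+^{\alpha}$.

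Next, set $\delta:=\varepsilon^{1-\gamma}/2$ and choose $T$ so large that $(1+T)^{-1}\le \delta$. The event in (5.1) is then contained in $\{\sup_{t\ge T}|M_n(t)|/(1+t)>\delta\}$, so the proof reduces to showing
$$
 \lms_{n\to\infty}\frac{1}{n^{2(1-\gamma)}}\ln\mathbf{P}\Bigl(\sup_{t\ge T}\frac{|M_n(t)|}{1+t}>\delta\Bigr)\le -N
$$
for $T$ sufficiently large. The process $M_n$ is a continuous Gaussian martingale with \emph{deterministic} quadratic variation $\langle M_n\rangle_t=\frac{(1-\gamma)^2\sigma^2}{n^{2(1-\gamma)}}\int_0^t e^{-2\nu s}\,ds$; when $\mu\ge 0$ this is bounded by $c_1 t/n^{2(1-\gamma)}$ for a constant $c_1=c_1(\gamma,\sigma)>0$.

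With this bound the estimate follows the blueprint of Lemma~4.1 verbatim. Decompose dyadically
$$
 \mathbf{P}\Bigl(\sup_{t\ge T}\frac{|M_n(t)|}{1+t}>\delta\Bigr)\le \sum_{r=1}^{\infty}\mathbf{P}\Bigl(\sup_{0\le t\le T(r+1)}|M_n(t)|>\delta Tr\Bigr),
$$
apply Doob's submartingale inequality to the exponential martingales $\exp(\pm c M_n(t)-\tfrac{c^2}{2}\langle M_n\rangle_t)$, and optimise in $c>0$ to bound each summand by $2\exp\bigl(-\delta^2(Tr)^2/(2\langle M_n\rangle_{T(r+1)})\bigr)\le 2\exp(-c_2\delta^2 Tr\,n^{2(1-\gamma)})$ (using $r^2/(r+1)\ge r/2$ for $r\ge 1$). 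The geometric series sums to $C\exp(-c_2\delta^2 T\,n^{2(1-\gamma)})$, and taking $T\ge N/(c_2\delta^2)$ yields (5.1).

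The main obstacle is the case $\mu<0$: there $\langle M_n\rangle_t$ grows exponentially in $t$, so the crude majorisation that discarded the negative $-Z_n^{-1}$ drift already fails on a single dyadic block, and the stabilising effect of the nonlinear drift must be retained. A natural remedy is to work instead with $Y_n(t):=e^{-\mu t}X_n(t)$, which is a non-negative local martingale satisfying $dY_n=\frac{\sigma}{n^{1-\gamma}}e^{-\nu t}Y_n^{\gamma}\,dw$, apply the exponential-martingale method to its stochastic logarithm on time-windows on which $Y_n$ is pinned in a bounded range, and combine with the classical time-change identifying the CEV process with a squared Bessel process in order to transport the tail bound onto the whole half-line. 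This last step is where I expect the technical heart of the argument to lie.
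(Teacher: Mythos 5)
Your proof is correct and follows essentially the same route as the paper: apply It\^o's formula to $(X_n(t)e^{-\mu t})^{1-\gamma}$ (equivalently, to $Z_n=X_n^{1-\gamma}$ followed by the integrating factor $e^{-\mu(1-\gamma)t}$), discard the non-positive $-Z_n^{-1}$ drift correction, and reduce the tail event to the Gaussian-martingale estimate of Lemma~4.1 via the bound $|\sigma(1-\gamma)e^{-\mu(1-\gamma)s}|\le\sigma(1-\gamma)$. Your closing worry about $\mu<0$ is well founded, but it is a limitation of the paper's argument as well, not a defect of yours: although the paper calls $\mu$ ``arbitrary,'' both its proof of Lemma~5.1 and its verification of condition \textbf{II} (where $\int_0^t e^{-2\mu(1-\gamma)s}\,ds$ is bounded by $\tfrac{1}{2\mu(1-\gamma)}$) implicitly require $\mu>0$, so no squared-Bessel detour is needed to match what the paper actually proves.
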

\begin{proof}
Denote $\tau = \inf \{t\geq 0: X_n(t)=0\} \wedge \infty$.
Using It\^o's formula for $(X_n(t)e^{-\mu t })^{1-\gamma}$, $t\in [0,\tau)$, we have
\begin{eqnarray*}
(X_n(t)e^{-\mu t })^{1-\gamma}&=&1+\frac{1}{n^{1-\gamma}}\int_0^t \sigma (1-\gamma)e^{-\mu(1-\gamma)  s} dw(s)\\
&&
-\frac{1}{2n^{2(1-\gamma)}}\int_0^t\gamma(1-\gamma)\frac{\sigma^2e^{-\mu(1-\gamma)  s}}{(X_n(s))^{1-\gamma}}ds, \ \ \ t\in [0,\tau).
\end{eqnarray*}
Since$X_n(t)$ is non-negative with probability 1
\begin{eqnarray*}
(X_n(t)e^{-\mu t })^{1-\gamma}&\leq& 1+\frac{1}{n^{1-\gamma}}\int_0^t \frac{\sigma (1-\gamma)}{e^{\mu(1-\gamma)  s}} dw(s)\\
&\leq& 1+\frac{1}{n^{1-\gamma}} \biggl|\int_0^t \frac{\sigma (1-\gamma)}{e^{\mu(1-\gamma)  s}} dw(s) \biggl| \ \text{a.s.}, \ \ \ t\in [0,\tau).
\end{eqnarray*}
Since $X_n(t)\equiv 0$ for $t\geq \tau$, the above inequality trivially holds for $t\in [0,\infty)$.

 Therefore for $T>2/{\varepsilon^{1-\gamma}}$
\begin{eqnarray*}
 \mathbf{P}\biggl(
\sup\limits_{t\geq T}\frac{X_n(t)}{e^{\mu t } (1+t)^{\frac{1}{1-\gamma}}}>\varepsilon\biggl)&=&\mathbf{P}\biggl(\sup\limits_{t\geq T}\frac{(X_n(t)e^{-\mu t })^{1-\gamma}}{1+t}>\varepsilon^{1-\gamma}\biggl)\\
&\leq& \mathbf{P}\biggl(\sup\limits_{t\geq T}\biggl(\frac{1}{1+t}+\frac{1}{n^{1-\gamma}(1+t)} \biggl|\int_0^t \frac{\sigma (1-\gamma)}{e^{\mu(1-\gamma)  s}} dw(s) \biggl|\biggl)>\varepsilon^{1-\gamma}\biggl)\\
&\leq& \mathbf{P}\biggl(\sup\limits_{t\geq T}\frac{1}{n^{1-\gamma}(1+t)} \biggl|\int_0^t \frac{\sigma (1-\gamma)}{e^{\mu(1-\gamma)  s}} dw(s) \biggl|>\frac{\varepsilon^{1-\gamma}}{2}\biggl).
\end{eqnarray*}
Since
$|\sigma (1-\gamma)e^{-\mu(1-\gamma)s}| \leq \sigma (1-\gamma)$, using Lemma 4.1 we obtain (\ref{5.1}).
\end{proof}

Denote by  $\C^+$ the set of functions in $f\in \C$, such that $f(0)=1$, $f(t)\geq 0$ for all $t\geq 0$. Define a metric
in $\C^+$ by
$$
  \r^{\mu,\gamma}(f,g):=\sup_{t\ge 0}\frac{|f(t)-g(t)|}{e^{\mu t } (1+t)^{\frac{1}{1-\gamma}}}.
$$
It is obvious that the space $(\C^+,\r^{\mu,\gamma})$ is Polish.

\begin{theorem} The process $X_n$ satisfies LDP in space $(\C^+,\r^{\mu,\gamma})$ with rate $\frac{1}{n^{2(1-\gamma)}}$
and good rate function
$$ I(f)=\left\{
           \begin{array}{lcl}
                              \frac{1}{2}\int\limits_0^{\Theta(f)} \frac{(f'(t)-\mu f(t))^2}{(f(t))^{2\gamma}}dt, \text{ if}\ f(0)=1, \
                              f  \text{ is absolutely continuous}, \\
                              \infty, \ \text{otherwise,}\\
                              \end{array}
                              \right.$$
where $\Theta(f)=\inf\{t:~f(t)=0\}$.
\end{theorem}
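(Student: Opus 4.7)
The plan is to apply a version of Theorem \ref{thm} adapted to the space $(\C^+,\r^{\mu,\gamma})$. A direct inspection of the proofs of Lemmas \ref{Lemma2.1}--\ref{Lemma2.3} reveals that they exploit only positivity, continuity, and divergence-at-infinity of the normalising weight, together with completeness of the resulting Polish space; hence replacing $1+t^{1+\kappa}$ by $e^{\mu t}(1+t)^{1/(1-\gamma)}$ and the speed $n$ by $n^{2(1-\gamma)}$ yields an analogous statement for the CEV setting. It therefore suffices to verify conditions \textbf{I}, \textbf{II}, \textbf{III}.

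Condition \textbf{I} is the LDP for the CEV process on each finite interval $[0,T]$ with speed $n^{2(1-\gamma)}$ and rate function obtained by truncating the integral to $[0,T\wedge \Theta(f)]$; this can be extracted from LDP results for diffusions with H\"older-continuous diffusion coefficients (cf.\ \cite{Kle}, \cite{Fre}). The ``most likely extension'' beyond $T$ is the deterministic trajectory
\[
g(t) := \begin{cases} f(t), & 0\le t\le T, \\ f(T)e^{\mu(t-T)}, & t > T,\ f(T) > 0, \\ 0, & t > T,\ f(T) = 0,\end{cases}
\]
for which on $(T,\infty)$ either $g'(t) - \mu g(t) \equiv 0$ or $g \equiv 0$; in either case the integrand vanishes on $(T,U]$ and $I_0^U(g^{(U)}) = I_0^T(f)$.

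Condition \textbf{II} is reduced to a scalar computation by the substitution $h(t) := (f(t) e^{-\mu t})^{1-\gamma}$, valid for $t < \Theta(f)$, which satisfies $h(0) = 1$ and, by elementary differentiation,
\[
\int_0^{\Theta(f)} (h'(t))^2 e^{2(1-\gamma)\mu t}\,dt = 2(1-\gamma)^2 I(f).
\]
Applying the Cauchy--Bunyakovski inequality to $h(t)-1 = \int_0^t h'(s)\,ds$, written as $\int_0^t h'(s)e^{(1-\gamma)\mu s}\cdot e^{-(1-\gamma)\mu s}\,ds$, and treating separately the cases $\mu > 0$ and $\mu = 0$, one obtains $f(t) \le C_r e^{\mu t}$ and $f(t) \le C_r t^{1/(2(1-\gamma))}$ respectively, both of which are $o\!\left(e^{\mu t}(1+t)^{1/(1-\gamma)}\right)$ as $t\to\infty$ uniformly over $B_r^+$. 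Condition \textbf{III} is precisely Lemma 5.1.

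The main obstacle is Condition \textbf{I}: since $\sigma(x) = x^\gamma$ is only H\"older continuous at $0$, the classical Freidlin--Wentzell machinery does not apply directly, and the rate function's explicit dependence on the hitting time $\Theta(f)$ makes lower semi-continuity on $\C[0,T]$ delicate at trajectories tangent to $\{0\}$. Once the finite-interval LDP is in place, conditions \textbf{II} and \textbf{III} are routine, and the infinite-interval conclusion follows from the abstract scheme developed in \S 3, applied with the weighted metric $\r^{\mu,\gamma}$ in place of $\r_\kappa$.
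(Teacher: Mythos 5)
Your proposal follows essentially the same route as the paper: Condition \textbf{I} via the finite-interval LDP of \cite{Kle} with the extension solving $g'=\mu g$ (vanishing rate beyond $T$), Condition \textbf{II} via the substitution $h(t)=(f(t)e^{-\mu t})^{1-\gamma}$ (the paper writes $f=e^{\mu t}g$ and works with $g^{1-\gamma}$, which is the identical computation) followed by Cauchy--Bunyakovskii, and Condition \textbf{III} via Lemma 5.1. The only differences are cosmetic: you explicitly flag the adaptation of the abstract scheme to the weight $e^{\mu t}(1+t)^{1/(1-\gamma)}$ and speed $n^{2(1-\gamma)}$, and you treat $\mu=0$ separately, whereas the paper's bound implicitly assumes $\mu>0$.
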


\begin{proof}
Condition \textbf{I} follows from \cite{Kle} and that one can extend $f$ for $t\geq T$ by the solution of the differential equation
$$
g'(t)=\mu g(t), \ \ \ g(T)=f(T), \ t\geq T.
$$
We verify condition \textbf{II}.
Let $$\lms_{T\to \infty}I^T(f^{(T)})=\frac{r^2}{2}<\infty.$$
Write $f(t)$   as $f(t)=e^{\mu t}g(t)$, $g(0)=1$.
Then
$$
I^T(f^{(T)})=\frac{1}{2}\int_0^T e^{2\mu(1-\gamma) t}\biggl(\frac{g'(t)}{g^\gamma(t)}\biggl)^2dt.
$$
 Denote
\begin{equation}{\label{5.2}}
u^2(t)=e^{2\mu(1-\gamma) t}\biggl(\frac{g'(t)}{g^\gamma(t)}\biggl)^2, \ \ \ \lms_{T\to \infty}\int_0^T u^2(t)dt=r^2.
\end{equation}
Then
$$
\frac{g'(t)}{g^\gamma(t)}=e^{-\mu(1-\gamma) t}u(t), ~~~ g(0)=1.
$$
Solving, we have
$$
g^{1-\gamma}(t)=1+(1-\gamma)\int_0^te^{-\mu(1-\gamma) s}u(s)ds.
$$
Using Cauchy-Bunyakovskii inequality and (\ref{5.2})  we have
$$
|g^{1-\gamma}(t)|\leq 1+(1-\gamma) \biggl(\int_0^te^{-2\mu(1-\gamma) s}ds\int_0^tu^2(s)ds\biggl)^{1/2}\leq 1+r\biggl(\frac{1-\gamma}{2\mu}\biggl)^{1/2}.
$$
Hence
$$|f(t)|\leq e^{\mu t}\biggl(1+r\biggl(\frac{1-\gamma}{2\mu}\biggl)^{1/2}\biggl)^{\frac{1}{1-\gamma}}.$$
It now follows that
$$
\lms_{T\to \infty}\sup_{T\geq t}\frac{|f(t)|}{e^{\mu t}(1+t)^{\frac{1}{1-\gamma}}}=0.
$$
Condition \textbf{III} follows from Lemma 5.1. Theorem 5.1 is now proved.
\end{proof}

{\bf Acknowledgement.} This research was supported by the RFFI projects:
13--01--12415 ofi-m, 14--01--0020--a, and the Australian Research Council Grant DP120102728. The authors are grateful to the Referee for  comments and suggestions.

\end{document}